\DeclareMathOperator{\CCa}{CC_A}
\DeclareMathOperator{\CCaLeft}{CC_A^{\leftarrow_\ep}}
\DeclareMathOperator{\spec}{spec}
\DeclareMathOperator{\UR}{UR}
\DeclareMathOperator{\Reach}{Reach}
\newcommand{\LM}{\mathcal{L}}
\newcommand{\dt}{\delta}
\newcommand{\ep}{\epsilon}
\newcommand{\Scal}{\mathcal{S}}
\newcommand{\Sig}{\Sigma}
\newcommand{\s}{\sigma}
\newcommand{\Mt}{\mathcal{M}}
\newcommand{\llb}{\llbracket}
\newcommand{\rrb}{\rrbracket}
\newtheorem{theorem}{Theorem}[section]
\newtheorem{definition}{Definition}
\newtheorem{proposition}[theorem]{Proposition}
\newtheorem{assumption}{Assumption}
\newtheorem{example}[theorem]{Example}
\newtheorem{remark}{Remark}
\let\NAT@parse\undefined
\newcommand{\PSPACE}{\mathsf{PSPACE}}
\newcommand{\NPSPACE}{\mathsf{NPSPACE}}
\newcommand{\coNPSPACE}{\mathsf{coNPSPACE}}
\newcommand{\EXPTIME}{\mathsf{EXPTIME}}
\newcommand{\PTIME}{\mathsf{P}}
\newcommand{\NL}{\mathsf{NL}}
\newcommand{\coNL}{\mathsf{coNL}}
\newcounter{enumi_saved}
\newenvironment{myenumerate} {
\begin{enumerate}[(i)]\setcounter{enumi}{\value{enumi_saved}}}
{\setcounter{enumi_saved}{\value{enumi}}\end{enumerate}}
\tikzset{elliptic state/.style={draw,ellipse}}
\tikzset{
node distance=3cm, 
every state/.style={thick, fill=gray!10}, 
initial text=$ $, 
}
\definecolor{green}{rgb}{0.1,0.7,0.1}
\title{New methods for verifying strong periodic detectability and strong periodic D-detectability of discrete-event systems\footnote{The materials in the paper were originally submitted to Journal on April 5, 2020.}}
\author{Kuize Zhang\\
{\small Control Systems Group, Technical University of Berlin, 10587 Berlin, Germany}\\
{\small kuize.zhang@campus.tu-berlin.de}
	}
\begin{document}

\date{}

\maketitle

{\bf Abstract}
	In this paper, in discrete-event systems modeled by finite-state automata (FSAs), we show new thinking 
	on the tools of detector and concurrent composition and derive two new algorithms for verifying strong periodic 
	detectability (SPD) without any assumption that run in $\NL$; we also reconsider the tool of observer and
	derive a new algorithm for verifying strong periodic D-detectability (SPDD) without any assumption that runs
	in $\PSPACE$. These results
	strengthen the $\NL$ upper bound on verifying SPD and the $\PSPACE$ upper bound on verifying SPDD
	for deadlock-free and divergence-free FSAs in the literature.

{\bf Keywords}
	discrete-event system, finite-state automaton, strong periodic (D-)detectability, complexity, observer, detector, concurrent composition

\section{Introduction}

\section{Introduction}

\subsection{Background}

\emph{Detectability} is a basic property of partially-observed dynamical systems: when it holds one
can use an observed output/label sequence produced by a system to reconstruct its states
\cite{Shu2007Detectability_DES,Shu2011GDetectabilityDES,Zhang2019KDelayStrDetDES,Zhang2020DetPNFA}.
This property plays a fundamental role in many related control problems such as observer design and controller 
synthesis. Detectability is quite related to another fundamental property diagnosability which implies
occurrences of all faulty events could be detected after sufficiently many occurrences of subsequent events
\cite{Sampath1995DiagnosabilityDES}. Recently, strong detectability and diagnosability have been unified
into one mathematical framework \cite{Zhang2021UnifyingDetDiagPred} in \emph{discrete-event systems} (DESs)
modeled by \emph{finite-state automata} (FSAs). On the other hand, detectability is
strongly related to many cyber-security properties. For example, the property of opacity, which has been
originally proposed to describe information flow security  in computer science in the early 2000s 
\cite{Mazare2004Opacity} can be seen as the absence of detectability.

For \emph{discrete-event systems} (DESs) modeled by \emph{finite-state automata} (FSAs),
the verification problems for different definitions of detectability
have been widely studied
\cite{Shu2007Detectability_DES,Shu2011GDetectabilityDES,Zhang2019KDelayStrDetDES,Zhang2017PSPACEHardnessWeakDetectabilityDES,Masopust2018ComplexityDetectabilityDES,Zhang2020DetPNFA,Balun2021ComplexitySPDdetJournal,Zhang2021UnifyingDetDiagPred},
in which several complexity lower bounds and upper bounds for these problems were obtained,
but most of the upper bounds
depend on two fundamental assumptions that a system is deadlock-free and divergence-free.
These requirements are collected in Assumption~\ref{assum1_Det_PN}: when it holds, a system will always
run and generate an infinitely long label/output sequence. The first verification algorithm for detectability 
of DESs that does not depend on Assumption~\ref{assum1_Det_PN} was given by us in \cite{Zhang2019KDelayStrDetDES}
by developing a technique called \emph{concurrent composition}, which was used to verify negation of strong
detectability. In the current paper, we further develop new methods to obtain verification algorithms
for strong periodic detectability and strong periodic D-detectability that do not depend on any assumption,
so that complexity upper bounds will be obtained for all FSAs.

We recall basic complexity results used in the paper (see \cite{Sipser2006TheoryofComputation,Immerman1988NLcoNL}).
The symbols $\NL$, $\PTIME$, $\PSPACE$,
$\NPSPACE$, and $\EXPTIME$ denote the sets of problems solvable in nondeterministic logarithmic space, 
polynomial time, polynomial space, nondeterministic polynomial space, and exponential time,
respectively. $\coNL$ and $\coNPSPACE$ denote the sets of problems whose complements
belong to $\NL$ and $\NPSPACE$, respectively. It is known that $\NL\subset\PTIME\subset\PSPACE\subset
\EXPTIME$, $\NL=\coNL$, and $\PSPACE=\NPSPACE=\coNPSPACE$. It is also known that $\NL\subsetneq\PSPACE$ and 
$\PTIME\subsetneq\EXPTIME$, but whether the rest of these containments are strict are long-standing open
questions. It is widely conjectured all the other containments are strict.
A problem $A$ is \emph{$\NL$-hard} (resp., \emph{$\PSPACE$-hard}) if every problem in $\NL$ (resp., 
$\PSPACE$) is log space (resp., polynomial time) reducible to $A$, for which A has an $\NL$ (resp., $\PSPACE$)
lower bound. A problem $A$ is \emph{$\NL$-complete}
(resp., \emph{$\PSPACE$-complete}) if $A$ belongs to $\NL$ (resp., $\PSPACE$) and is $\NL$-hard
(resp., $\PSPACE$-hard). If a problem $A$ belongs to $\NL$ (resp., $\PSPACE$), then $A$ has an $\NL$
(resp., $\PSPACE$) upper bound. In this paper, we sometimes say a detectability property has an
$\NL$ ($\PSPACE$) upper bound for short, which means that the problem of verifying the property
in FSAs belongs to $\NL$ ($\PSPACE$).

\subsection{Literature review on verification of detectability in FSAs}

\textbf{Results based on Assumption~\ref{assum1_Det_PN}}
In \cite{Shu2007Detectability_DES}, by using an \emph{observer}\footnote{i.e., the powerset construction used for 
determinizing nondeterministic finite automata with $\ep$-transitions \cite{Sipser2006TheoryofComputation}} method,
exponential-time algorithms were given to verify four notions of detectability: strong (periodic) detectability
and weak (periodic) detectability. Strong detectability means that there is a time delay $k$, for \emph{each}
infinite-length event sequence $s$ generated
by an FSA, every prefix of the label/output sequence of $s$ of length greater than $k$ allows reconstructing
the current state. Weak detectability relaxes strong detectability by replacing \emph{each} to \emph{some}.
Weak detectability is strictly weaker than strong detectability. 
Strong periodic detectability implies that at any time, after some observation time
delay no greater than a given value, the system states can be determined along 
\emph{each} infinite-length transition sequence also by observing the corresponding output sequence.
Weak periodic detectability relaxes strong periodic detectability also by changing \emph{each} to
\emph{some}. Later in \cite{Shu2011GDetectabilityDES}, by using a \emph{detector} (obtained from an observer
by splitting all its states into subsets of cardinality $2$) method, 
polynomial-time algorithms were designed for verifying strong (periodic)
detectability. The problem of verifying weak (periodic) detectability of FSAs was proven to be
PSPACE-complete \cite{Zhang2017PSPACEHardnessWeakDetectabilityDES}
and the problem of verifying strong (periodic) detectability was proven to be $\NL$-complete
\cite{Masopust2018ComplexityDetectabilityDES}.

In order to make detectability adapt to more scenarios, one can weaken detectability
to D-detectability
in the sense of not exactly determining the states but making sure that the 
states cannot contain both states of any pair of states that are previously specified 
\cite{Shu2011GDetectabilityDES}. All above notions
of detectability, including strong/weak detectability and strong/weak periodic detectability,
can be extended to their D-versions. For example, strong D-detectability can be verified in polynomial
time \cite{Shu2011GDetectabilityDES}, while verifying strong periodic D-detectability 
is $\PSPACE$-complete \cite{Balun2021ComplexitySPDdetJournal}. 

Note that all the above complexity upper bounds were obtained by the verification algorithms designed in 
\cite{Shu2007Detectability_DES,Shu2011GDetectabilityDES} based on Assumption~\ref{assum1_Det_PN}. 
For an FSA that does not satisfy Assumption~\ref{assum1_Det_PN}, the algorithms may not return a correct 
answer. In \cite[Remark 2]{Zhang2020DetPNFA}, we had given a counterexample to show that neither the observer 
method \cite{Shu2007Detectability_DES} nor the detector method \cite{Shu2011GDetectabilityDES} correctly
verifies its strong detectability. Later in Remark~\ref{rem1_spdet_FSA} and
Remark~\ref{rem2_spdet_FSA}, we will give counterexamples to show that neither of the two methods correctly
verifies their strong periodic detectability and strong periodic D-detectability.

\textbf{Results which do not depend on assumptions}
The two fundamental assumptions shown in Assumption~\ref{assum1_Det_PN} was for the first time removed 
by us in \cite{Zhang2019KDelayStrDetDES,Zhang2020DetPNFA}
by developing a \emph{concurrent-composition} method and verifying \emph{negation} of strong detectability.
In \cite{Zhang2020DetPNFA}, weak detectability was also verified without any assumption. Later in 
\cite{Zhang2021UnifyingDetDiagPred}, an $\NL$ upper bound was given for the verification problem of 
strong detectability based on the concurrent-composition method. In addition, decentralized settings
of strong detectability, diagnosability, and predictability were unified into one mathematical framework
\cite{Zhang2021UnifyingDetDiagPred}. In \cite{Zhang2020RevisitingDelayStrDetDES}, strong D-detectability
was verified in polynomial time also by the concurrent-composition method.

\subsection{Contribution of the paper}

The contributions of the paper are as follows:
\begin{enumerate}
	\item We use the detector and concurrent composition to derive two new algorithms for verifying
		strong periodic detectability of FSAs without any assumption, where both algorithms imply
		an $\NL$ upper bound for strong periodic detectability,
		which strengthens the $\NL$ upper bound given in \cite{Masopust2018ComplexityDetectabilityDES} 
		under Assumption~\ref{assum1_Det_PN}.
	\item We use the observer to derive a new algorithm for verifying strong periodic D-detectability of FSAs
		without any assumption, where
		the algorithm implies a $\PSPACE$ upper bound for strong periodic D-detectability,
		which strengthens the $\PSPACE$ upper bound given in \cite{Balun2021ComplexitySPDdetJournal}
		under Assumption~\ref{assum1_Det_PN}. See Tab.~\ref{tab1:ComplexityDetectabilityFSA} for a collection of related results.
\end{enumerate}

Differently from verifying strong periodic detectability itself in 
\cite{Shu2011GDetectabilityDES,Masopust2018ComplexityDetectabilityDES},
we verify its negation. Following such an opposite way, for an FSA,
we obtain two conditions on its observer such that at least one of them holds exactly violates its
strong periodic detectability. Thus an exponential-time algorithm for verifying strong periodic detectability
is obtained without any assumption (Theorem~\ref{thm3_spdet_FSA}). Furthermore, by developing 
a new relationship between the notions of observer and detector (Proposition~\ref{prop5_spdet_FSA}),
the exponential-time algorithm is reformulated by a detector, resulting in a polynomial-time
verification algorithm (Theorem~\ref{thm4_spdet_FSA}). Thus, an $\NL$ upper bound naturally follows
from the polynomial-time verification algorithm
(Theorem~\ref{thm5_spdet_FSA}). On the other hand, by developing more relationships between 
the notions of observer, detector, and concurrent composition (Proposition~\ref{prop8_spdet_FSA}),
we construct a variant of the concurrent composition by using which strong periodic detectability
can also be verified in $\NL$ (Theorem~\ref{thm8_spdet_FSA}). Similarly, we also obtain a polynomial-space
verification algorithm for strong periodic D-detectability by verifying its negation.

The remainder is structured as follows. In Section~\ref{sec2:pre}, basic notation and definitions in
FSAs are introduced. In Section~\ref{sec3:mainresult}, the main results are shown. Section~\ref{sec4:conc}
ends up this paper with short conclusion.

\begin{table}
	\centering
	\begin{tabular}{c|c}
		\hline\hline
		strong detectability & strong D-detectability\\
		\hline
		\begin{tabular}{c}
			$\NL$ (\cite{Zhang2021UnifyingDetDiagPred})\\
			$\NL$-complete* (\cite{Masopust2018ComplexityDetectabilityDES})
		\end{tabular}
		&
		\begin{tabular}{c}
			$\PTIME$ (\cite{Zhang2020RevisitingDelayStrDetDES})\\
			$\NL$-complete* (\cite{Balun2021ComplexitySPDdetJournal})
		\end{tabular}\\\hline\hline
		strong periodic detectability & strong periodic D-detectability\\
		\hline
		\begin{tabular}{c}
			$\NL$ (Thm.~\ref{thm5_spdet_FSA})\\
			$\NL$-complete* (\cite{Masopust2018ComplexityDetectabilityDES})
		\end{tabular}
		&
		\begin{tabular}{c}
			$\PSPACE$ (Thm.~\ref{thm7_spdet_FSA})\\
			$\PSPACE$-complete* (\cite{Balun2021ComplexitySPDdetJournal})
		\end{tabular}\\\hline\hline
		weak detectability & weak periodic detectability\\\hline
			$\PSPACE$-complete* (\cite{Zhang2017PSPACEHardnessWeakDetectabilityDES})
		&
			$\PSPACE$-complete* (\cite{Zhang2017PSPACEHardnessWeakDetectabilityDES})
		\\\hline
	\end{tabular}
	\caption{Complexity results for verifying different definitions of detectability in FSAs,
	where $^*$ means that the $\NL$ and $\PSPACE$ upper bounds only apply to FSAs satisfying 
	Assumption~\ref{assum1_Det_PN}.}
	\label{tab1:ComplexityDetectabilityFSA}
\end{table}

\section{Preliminaries}\label{sec2:pre}

We introduce necessary notion that will be used throughout this paper.
For a finite alphabet $\Sig$, $\Sig^*$ and $\Sig^{\omega}$ are used to denote the set of finite sequences
(called \emph{words}) of elements of $\Sig$ including the empty word $\epsilon$
and the set of infinite sequences (called \emph{configurations}) of elements of $\Sig$,
respectively. $\Sig^{+}:=\Sig^*\setminus\{\epsilon\}$.
For a word $s\in \Sig^*$,
$|s|$ stands for its length. 
For $s\in \Sig^+$ and natural number $k$, $s^k$ and $s^{\omega}$ denote the \emph{concatenations}
of $k$-copies and infinitely many copies of $s$, respectively.
For a word (configuration) $s\in \Sig^*(\Sig^{\omega})$, a word $s'\in \Sig^*$ is called a \emph{prefix} of $s$,
denoted as $s'\sqsubset s$,
if there exists another word (configuration) $s''\in \Sig^*(\Sig^{\omega})$ such that $s=s's''$.
For two natural numbers $i\le j$, $\llb i,j\rrb$ denotes the set of all integers no less than $i$ and no
greater than $j$;
and for a set $S$, $|S|$ its cardinality and $2^S$ its power set. As usual, a \emph{singleton}
is defined by a set of cardinality $1$. $\subset$ denotes the subset relation.

A DES modeled by an  FSA is a sextuple
\begin{equation}\label{DES_Det_NDES}
	\Scal=(X,T,X_0,\dt,\Sig,\ell),
\end{equation}
where $X$ is a finite set of \emph{states}, $T$ a finite set of \emph{events},
$X_0\subset X$ a set of \emph{initial states},
$\dt\subset X\times T\times X$ a \emph{transition relation}, $\Sig$ a finite set of \emph{outputs (labels)},
and $\ell:T\to\Sig\cup\{\epsilon\}$
a \emph{labeling function}. 
$\ell$ can be recursively extended to $\ell:T^*\cup T^{\omega}\to\Sig^*\cup\Sig^{\omega}$
as $\ell(t_1t_2\dots)=\ell(t_1)\ell(t_2)\dots$ and particularly $\ell(\ep)=\ep$.
The event set $T$ can be rewritten as disjoint union
of \emph{observable} event set $T_{o}=\{t\in T|\ell(t)\in\Sig\}$ and \emph{unobservable} event set $T_{uo}=
\{t\in T|\ell(t)=\ep\}$.
Transition relation $\dt$ is recursively extended to $\dt\subset X\times T^*\times X$ in the usual way.
We call a transition with an observable (unobservable)
event an \emph{observable (unobservable) transition}. We also denote a transition sequence $(x,s,x')\in\dt$
by $x\xrightarrow[]{s}x'$, where $x,x'\in X$, $s\in T^*$.
For $x\in X$ and $s\in T^+$, $(x,s,x)$ is called a \emph{transition 
cycle} if $(x,s,x)\in\dt$. An \emph{observable} (resp., \emph{unobservable}) transition cycle is defined 
by a transition cycle with at least one (resp., with no) observable transition.
Automaton $\Scal$ is called \emph{deterministic} if $|X_0|=1$ and 
for all $x,x',x''\in X$ and $t\in T$, $(x,t,x'),(x,t,x'')\in\dt$ imply $x'=x''$.
For deterministic $\Scal$, for all $x\in X$ and all $s\in T^*$, we also denote the unique state $x'\in X$
(if any) satisfying $x\xrightarrow[]{s}x'$ by $\dt(x,s)$.
For two states $x,x'\in X$, we say \emph{$x'$ is reachable from $x$} if there is $s\in T^+$
such that $(x,s,x')\in\dt$; we say \emph{$x'$ is reachable} if either $x'\in X_0$ or $x'$ is 
reachable from some initial state. Analogously, reachability from a set of states to a state and vice versa
could also be defined. Particularly, we call a transition cycle \emph{reachable} if it is reachable from 
some initial state.

For each $\s\in\Sig^*$, we denote by $\Mt({\Scal},\s)$ the \emph{current-state estimate},
i.e., the set of
states that the system can be in after $\s$ has been observed, i.e., 
$\Mt({\Scal},\s):=\{x\in X|(\exists x_0\in X_0)(\exists s\in T^*)[
(\ell(s)=\s)\wedge(x_0\xrightarrow[]{s}x)]\}$.
We use $L(\Scal)=\{s\in T^*|(\exists x_0\in X_0)(\exists x\in X)[x_0\xrightarrow[]
{s}x]\}$ to denote the set of finite-length event sequences generated by $\Scal$,
we also use $L^{\omega}(\Scal)=\{t_1t_2\dots$$\in T^{\omega}|(\exists x_0\in X_0)
(\exists x_1,x_2,\dots$$\in X)[x_0\xrightarrow[]{t_1}x_1\xrightarrow[]{t_2}\cdots]\}$ to 
denote the set of infinite-length event sequences generated by $\Scal$.
Analogously, we use $\LM({\Scal})$ denotes the \emph{language generated} by $\Scal$,
i.e., $\LM({\Scal}):=\{\s\in\Sig^*|\Mt({\Scal},\s)\ne\emptyset\}$,
we also use $\LM^{\omega}({\Scal})$ to denote the $\omega$-\emph{language generated} by $\Scal$,
i.e., $\LM^{\omega}(\mathcal{ S}):=\{\s\in\Sigma^{\omega}|(\exists s\in L^{\omega}(\Scal)
[\ell(s)=\s]\}$.

For a state $x\in X$, its \emph{unobservable reach} is defined by
$\UR(x):=\{x'\in X|(\exists s\in (T_{uo})^*)[(x,s,x')\in\dt]\}$.
For a subset $X'\subset X$, $\UR(X')=\bigcup_{x\in X'}\UR(x)$. Hence $\UR(X_0)=\Mt(\Scal,\ep)$.
For a state $x\in X$,
its \emph{observable reach under $\s\in \Sig$} is defined by $\Reach_\s(x):=\{
x'\in X|(\exists t\in T)[((x,t,x')\in\dt)\wedge(\s=\ell(t))]\}$. Analogously, for a subset
$X'\subset X$, $\Reach_{\s}(X')=\bigcup_{x\in X'}\Reach_\s(x)$.

The following two assumptions are commonly used in detectability studies 
(cf. \cite{Shu2007Detectability_DES,Shu2011GDetectabilityDES,Masopust2018ComplexityDetectabilityDES,Balun2021ComplexitySPDdetJournal}),
but are not needed in the current paper based on our new thinking of the tools of observer,
detector, and concurrent composition.

\begin{assumption}\label{assum1_Det_PN}
	An FSA $\Scal$ as in \eqref{DES_Det_NDES} satisfies
	\begin{enumerate}[(A)]
		\item\label{item11_Det_PN}
			$\mathcal S$ is \emph{deadlock-free}, i.e.,
			for each reachable state $x\in X$, there exist $t\in T$ and $x'\in X$ such that $(x,t,x')\in\dt$;
		\item\label{item12_Det_PN} 	
			$\Scal$ is \emph{prompt} or \emph{divergence-free}, 
			i.e., for every reachable state $x\in X$ and every
			nonempty unobservable event sequence $s\in(T_{uo})^+$, there exists no transition
			sequence $x\xrightarrow[]{s}x$ in $\Scal$.
	\end{enumerate}
\end{assumption}

One sees \eqref{item11_Det_PN} implies $L^{\omega}(\Scal)\ne\emptyset$ if $X_0\ne\emptyset$;
while \eqref{item12_Det_PN} implies for all $s\in
L^{\omega}(\Scal)$, $\ell(s)\in\Sig^{\omega}$; hence \eqref{item11_Det_PN} and \eqref{item12_Det_PN}
together imply $\LM^{\omega}(\Scal)\ne\emptyset$ if $X_0\ne\emptyset$, but not vice versa.


%

\section{Main results}\label{sec3:mainresult}

\subsection{Preliminary results}

The definitions of strong detectability, strong periodic detectability,
and strong periodic D-detectability for FSAs are as follows \cite{Shu2011GDetectabilityDES}.

\begin{definition}[SD]\label{def4_Det_PN}
	An FSA $\Scal$ as in \eqref{DES_Det_NDES} is
	called \emph{strongly detectable} if there exists a positive
	integer $k$ such that for each infinite-length event sequence $s\in L^{\omega}({\Scal})$
	generated by $\Scal$, for each prefix $s'\sqsubset s$, if $|\ell(s')|>k$ then
	$|\Mt({\Scal},\ell(s'))|=1$. 
\end{definition}

\begin{definition}[SPD]\label{def9_Det_PN}
	An FSA $\Scal$ is
	called \emph{strongly periodically detectable} if there exists a positive
	integer $k$ such that for each $s\in L^{\omega}({\Scal})$
	and each $s'\sqsubset s$, there is $s''\in T^*$ such that
	$|\ell(s'')|<k$, $s's''\sqsubset s$, and $|\Mt(\Scal,\ell(s's''))|=1$.
\end{definition}

In order to formulate strong periodic D-detectability, we specify a set 
$$T_{\spec}\subset X\times X$$
of crucial state pairs that should be separated.
\begin{definition}[$T_{\spec}$-SPDD]\label{def11_Det_PN}
	An FSA $\Scal$ is
	called \emph{strongly periodically D-detectable with respect to $T_{\spec}$}
	if there exists a positive
	integer $k$ such that for each $s\in L^{\omega}({\Scal})$
	and each $s'\sqsubset s$, there is $s''\in T^*$ such that
	$|\ell(s'')|<k$, $s's''\sqsubset s$, and $(\Mt(\Scal,\ell(s's''))\times\Mt(\Scal,\ell(s's'')))
	\cap T_{\spec}=\emptyset$.
\end{definition}

In order to verify detectability of an FSA $\Scal$,
an \emph{observer} 
\begin{equation}\label{observer_DES}
	\Scal_{obs}:=(2^X\setminus\{\emptyset\},\Sig,\Mt(\Scal,\ep),\dt_{obs})
\end{equation}
as a deterministic FSA was constructed in \cite{Shu2007Detectability_DES}, where
$\Mt(\Scal,\ep)$ is the unique initial state; for all $X'\in 2^X$ and $\s\in\Sig^*$,
$\dt_{obs}(\Mt(\Scal,\ep),\s)=X'$ if and only if $X'=\Mt(\Scal,\s)$.
The size of $\Scal_{obs}$ is exponential of that of $\Scal$.

Later in \cite{Shu2011GDetectabilityDES}, a \emph{detector}
\begin{equation}\label{detector_DES}
	\Scal_{det}:=(Q,\Sig,\Mt(\Scal,\ep),\dt_{det})
\end{equation}
that is a nondeterministic FSA was used to provide polynomial-time algorithms
for verifying strong detectability and strong periodic detectability under Assumption~\ref{assum1_Det_PN},
where $Q\subset 2^{X}\setminus\{\emptyset\}$ consists of $\Mt(\Scal,\ep)$
and subsets of $X$ with cardinality $\le 2$; for all $q,q'\in Q$, and $\s\in\Sig$,
$(q,\s,q')\in\dt_{det}$ if and only if either (1) $|(\UR\circ\Reach_\s)(q)|>1$,
$q'\subset(\UR\circ\Reach_\s)(q)$, and $|q'|=2$,
or (2) $|(\UR\circ\Reach_\s)(q)|=1$ and $q'=(\UR\circ\Reach_\s)(q)$.
The size of $\Scal_{det}$ is polynomial of that of $\Scal$.
The results obtained in \cite{Shu2011GDetectabilityDES} are as follows.
\begin{proposition}[\cite{Shu2011GDetectabilityDES}]\label{prop1_spdet_FSA}
	Consider an FSA $\Scal$. Under Assumption~\ref{assum1_Det_PN}, $\Scal$
	is strongly detectable if and only if in $\Scal_{det}$, any state reachable
	from any reachable transition cycle is a singleton; $\Scal$ is strongly periodically detectable 
	if and only if in $\Scal_{det}$, every reachable transition cycle contains at least one
	singleton; $\Scal$ is strongly periodically D-detectable if and only if in $\Scal_{obs}$,
	every reachable transition cycle contains at least one state $q$ such that $(q\times q)\cap 
	T_{\spec}=\emptyset$.
\end{proposition}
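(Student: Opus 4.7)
The plan is to first reformulate the three notions as cycle conditions on $\Scal_{obs}$, and then for SD and SPD to transfer those conditions from $\Scal_{obs}$ to $\Scal_{det}$. Under Assumption~\ref{assum1_Det_PN}, every $s\in L(\Scal)$ extends to some $s'\in L^\omega(\Scal)$ with $\ell(s')\in\Sig^\omega$, so every reachable state of the deterministic finite $\Scal_{obs}$ sits on an infinite outgoing path that eventually enters a reachable cycle. A standard pumping argument with uniform bound $k=|\Scal_{obs}|$ then yields: SD $\Leftrightarrow$ every observer state reachable from a reachable cycle of $\Scal_{obs}$ is a singleton; SPD $\Leftrightarrow$ every reachable cycle of $\Scal_{obs}$ contains a singleton; SPDD $\Leftrightarrow$ every reachable cycle of $\Scal_{obs}$ contains a state $q$ with $(q\times q)\cap T_{\spec}=\emptyset$. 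Since SPDD is already phrased in terms of $\Scal_{obs}$, this completes that case.

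For SD and SPD, I would use the following bridging correspondence: for every label sequence $\s$ and every pair $\{x,y\}$ with $x\neq y$, $\{x,y\}$ is reachable in $\Scal_{det}$ via $\s$ iff $\{x,y\}\subset\Mt(\Scal,\s)$. The ``$\Rightarrow$'' direction follows by induction from the detector rule $q'\subset(\UR\circ\Reach_\s)(q)$. The ``$\Leftarrow$'' direction is proved by simulating, inside the detector, two trajectories of $\Scal$ ending at $x$ and $y$ with common label output $\s$: the detector state tracks the unordered pair of current states at each observable step---coinciding trajectories yield a singleton detector state, diverging ones yield a pair---each such step being permitted by the detector transition rule.

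Given this correspondence, one direction of the SPD claim is immediate: an all-pair cycle in $\Scal_{det}$ with cycle label $\alpha$ forces the deterministic $\Scal_{obs}$ on $\alpha^\omega$ to cycle through states of size $\geq 2$. For the converse, given a reachable observer cycle $q_0\to\dots\to q_n=q_0$ with all $|q_i|\geq 2$ and labels $\s_1,\dots,\s_n$, I would build a finite directed graph $\Gamma$ with nodes $(i,\{x,y\})$ for $0\leq i\leq n-1$, $\{x,y\}\subset q_i$, $x\neq y$, and edges $(i,\{x,y\})\to(i{+}1\bmod n,\{x',y'\})$ whenever $\{x',y'\}\subset(\UR\circ\Reach_{\s_{i+1}})(\{x,y\})$. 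Every node of $\Gamma$ has an incoming edge: given $\{x',y'\}\subset q_{i+1}$, pick preimages $u,v\in q_i$ of $x'$ and $y'$ respectively, and if $u=v$ replace $v$ by any $w\in q_i\setminus\{u\}$ (which exists since $|q_i|\geq 2$). Hence $\Gamma$ contains a cycle, which together with the pair-simulation argument above yields a reachable all-pair cycle in $\Scal_{det}$. The SD claim follows from the same correspondence: observer cycles lift to detector cycles (all-singleton via forced singleton transitions when $\Scal_{obs}$'s cycle is, or all-pair via $\Gamma$ when it is not), and non-singleton observer states reachable from observer cycles translate to pair detector states reachable from detector cycles. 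The main obstacle is the $\Gamma$-existence step, because the detector may be forced through singletons even when the corresponding observer state remains non-singleton, so a naive pair-to-pair simulation fails and one must argue existence of a cycle in the pair-valued dynamics as above.
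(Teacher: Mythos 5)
The paper itself gives no proof of this proposition: it is imported from \cite{Shu2011GDetectabilityDES}, so there is no in-paper argument to compare against. Your route is nonetheless close in spirit to the machinery the paper develops later for its new results: your ``bridging correspondence'' is essentially Proposition~\ref{prop5_spdet_FSA} applied backwards along an observer path, and your graph $\Gamma$ plays the role of the lifting-plus-pigeonhole arguments in the proof of Theorem~\ref{thm4_spdet_FSA}. The observer characterizations under Assumption~\ref{assum1_Det_PN}, the SPD transfer (both directions), and the SPDD case are sound. Two small imprecisions: when the two simulated trajectories coincide, the detector cannot in general enter a singleton --- rule (1) forces a two-element successor whenever $|(\UR\circ\Reach_\s)(q)|>1$ --- so the correct invariant is that the detector state \emph{contains} the current pair of trajectory states, not that it equals it; and the stated ``iff'' fails for $\s=\ep$ (the only detector state reachable via $\ep$ is $\Mt(\Scal,\ep)$), which is harmless since cycles are reached by nonempty label sequences.

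The genuine gap is in the SD case, for observer cycles that contain both singleton and non-singleton states. Your dichotomy ``all-singleton, or all-pair via $\Gamma$'' does not cover this mixed case: the in-degree argument for $\Gamma$ (and even the existence of a pair node at every index) needs $|q_i|\ge 2$ for \emph{every} state of the cycle, and indeed no all-pair detector cycle need exist there, since any detector run following the cycle's labels is forced through the singleton (e.g.\ an observer cycle $\{z\}\to\{a,b\}\to\{z\}$). Moreover, even when the cycle does lift, the ``translate'' of the non-singleton tail does not automatically attach: backward lifting from a pair inside $q$ lands on a pair inside the cycle state that need not lie on the lifted detector cycle. Both problems are fixable with your own tools: if the observer cycle contains a non-singleton state, lift it anchored at a singleton on the cycle (or via $\Gamma$ when there is none), which yields a reachable detector cycle already containing a pair, so no tail is needed; if the cycle is all-singleton, the lifted cycle consists of the same singletons and the tail's backward lift necessarily ends at that singleton. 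Alternatively, and more uniformly, backward-lift the whole pumped path $\mu\beta^{m}\gamma$ with $m$ exceeding the number of detector states and apply the pigeonhole principle to the resulting detector run. Finally, note that for SD you only argue the direction ``observer witness $\Rightarrow$ detector witness''; the converse (a pair reachable from a reachable detector cycle yields a non-singleton observer state reachable from a reachable observer cycle) should be stated explicitly, though it follows from the containment invariant plus pigeonhole exactly as in your SPD argument.
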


In \cite{Zhang2019KDelayStrDetDES}, in order to verify (delayed) strong detectability
of $\Scal$, the \emph{self-composition}
\begin{equation}\label{ConCom_Det_PN}
	\CCa(\Scal)=(X',T',X_0',\dt')
\end{equation} of $\Scal$ (i.e., the concurrent composition of $\Scal$ and itself) was constructed as follows:
\begin{itemize}
	\item $X'=X\times X$;
	\item $T'=T_o'\cup T'_{uo}$, where $T_o'=\{(\breve{t},\breve{t}')|\breve{t},\breve{t}'\in T,
		\ell(\breve{t})=\ell(\breve{t}')\in\Sig\}$,
		$T'_{uo}=\{(\breve{t},\epsilon)|\breve{t}\in T,\ell(\breve{t})=\epsilon\}\cup
		\{(\epsilon,\breve{t})|\breve{t}\in T,\ell(\breve{t})=\epsilon\}$;
	\item $X_0'=X_0\times X_0$;
	\item for all $(\breve{x}_1,\breve{x}_1'),(\breve{x}_2,\breve{x}_2')\in X'$, $(\breve{t},\breve{t}')
		\in T_o'$, $(\breve{t}'',\epsilon)\in T'_{uo}$,
		and $(\epsilon,\breve{t}''')\in T'_{uo}$,
		\begin{itemize}
			\item $((\breve{x}_1,\breve{x}_1'),(\breve{t},\breve{t}'),(\breve{x}_2,\breve{x}_2'))\in\dt'$ 
				if and only if $(\breve{x}_1,\breve{t},\breve{x}_2),\\(\breve{x}_1',\breve{t}',\breve{x}_2')\in\dt$,
			\item $((\breve{x}_1,\breve{x}_1'),(\breve{t}'',\epsilon),(\breve{x}_2,\breve{x}_2'))\in\dt'$ 
				if and only if $(\breve{x}_1,\breve{t}'',\breve{x}_2)\in\dt$, $\breve{x}_1'=\breve{x}_2'$,
			\item $((\breve{x}_1,\breve{x}_1'),(\epsilon,\breve{t}'''),(\breve{x}_2,\breve{x}_2'))\in\dt'$ 
				if and only if $\breve{x}_1=\breve{x}_2$, $(\breve{x}_1',\breve{t}''',\breve{x}_2')\in\dt$.
		\end{itemize}
	\end{itemize}

For an event sequence $s'\in (T')^{*}$, $s'(L)$ and $s'(R)$ denote its left and right
components, respectively. Similarly for $x'\in X'$, denote $x'=:(x'(L),x'(R))$.
In addition, for every $s'\in(T')^{*}$, $\ell(s')$
denotes $\ell(s'(L))$ or $\ell(s'(R))$, since $\ell(s'(L))=\ell(s'(R))$. 
In the above construction, $\CCa(\Scal)$ aggregates every pair of transition sequences of 
$\mathcal{S}$ producing the same label sequence. The size of $\CCa(\Scal)$ is polynomial
of that of $\Scal$. 

\subsection{Verifying strong periodic detectability}

In order to verify strong periodic detectability without any assumption,
we first characterize its negation.
By directly observing Definition~\ref{def9_Det_PN}, the following result follows.
\begin{proposition}\label{prop3_spdet_FSA}
	An FSA $\Scal$ is
	not strongly periodically detectable if and only if for every positive
	integer $k$, there exists $s_k\in L^{\omega}({\Scal})$ and
	prefix $s'\sqsubset s_k$ such that for all $s''\in T^*$, $s's''\sqsubset s_k$
	and $|\ell(s'')|<k$ imply $|\Mt(\Scal,\ell(s's''))|>1$.
\end{proposition}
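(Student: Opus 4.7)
The plan is straightforward: this proposition is the mechanical contrapositive of Definition~\ref{def9_Det_PN}, so the proof reduces to two ingredients, namely a careful negation of a three-quantifier formula and a simple non-emptiness observation about current-state estimates.

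First, I would rewrite Definition~\ref{def9_Det_PN} in purely quantifier form. The SPD property says
\[
(\exists k\in\N_{>0})(\forall s\in L^{\omega}(\Scal))(\forall s'\sqsubset s)(\exists s''\in T^*)\bigl[|\ell(s'')|<k\ \wedge\ s's''\sqsubset s\ \wedge\ |\Mt(\Scal,\ell(s's''))|=1\bigr].
\]
Negating this formula, pushing the negation inward, and converting the resulting negated conjunction into an implication yields
\[
(\forall k\in\N_{>0})(\exists s_k\in L^{\omega}(\Scal))(\exists s'\sqsubset s_k)(\forall s''\in T^*)\bigl[(|\ell(s'')|<k\ \wedge\ s's''\sqsubset s_k)\ \Rightarrow\ |\Mt(\Scal,\ell(s's''))|\ne 1\bigr].
\]
This is precisely the conclusion of the proposition, except that the right-hand side of the implication says $|\Mt(\Scal,\ell(s's''))|\ne 1$ rather than $|\Mt(\Scal,\ell(s's''))|>1$.

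The remaining step is to upgrade $\ne 1$ to $>1$ under the hypotheses of the implication. Suppose $s''\in T^*$ satisfies $s's''\sqsubset s_k$. Since $s_k\in L^{\omega}(\Scal)$, the finite prefix $s's''$ lies in $L(\Scal)$, so there exist $x_0\in X_0$ and $x\in X$ with $x_0\xrightarrow{s's''} x$. By the very definition of $\Mt(\Scal,\cdot)$, this reachable state $x$ belongs to $\Mt(\Scal,\ell(s's''))$, so $|\Mt(\Scal,\ell(s's''))|\ge 1$. Combined with $|\Mt(\Scal,\ell(s's''))|\ne 1$, this gives $|\Mt(\Scal,\ell(s's''))|>1$, which closes the equivalence.

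There is no real obstacle here: the argument is purely propositional once one observes that current-state estimates along legitimate prefixes of generated runs are non-empty. The only mild subtlety worth stating explicitly is that the implication form one obtains from negating the conjunction is logically equivalent to the statement in the proposition even for those $s''$ failing $|\ell(s'')|<k$ or $s's''\sqsubset s_k$ (for such $s''$ the implication is vacuously true), which matches exactly the form in which the conclusion is written.
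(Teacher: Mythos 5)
Your proof is correct and matches the paper's treatment: the paper gives no separate argument, asserting the proposition follows by directly negating Definition~\ref{def9_Det_PN}, which is exactly the quantifier manipulation you carry out. Your extra remark that $\Mt(\Scal,\ell(s's''))$ is non-empty for prefixes of generated $\omega$-sequences (upgrading $\ne 1$ to $>1$) is the only detail the paper leaves implicit, and you handle it correctly.
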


By Proposition~\ref{prop3_spdet_FSA}, the following proposition holds.
\begin{proposition}\label{prop4_spdet_FSA}
	An FSA $\Scal$ is
	not strongly periodically detectable if and only if at least one of the following two
	conditions holds.
	\begin{myenumerate}
		\item\label{item1_spdet_FSA}
			There exists $\gamma\in\LM(\Scal)$ and $x\in\Mt(\Scal,\gamma)$ such that $|\Mt(\Scal,\gamma)|>1$
			and there is a transition sequence $x\xrightarrow[]{s_1}x'\xrightarrow[]{s_2}x'$ for some
			$s_1\in (T_{uo})^*$, $s_2\in(T_{uo})^+$, $x'\in X$.
		\item\label{item2_spdet_FSA}
			There exists $\alpha\beta\in\LM(\Scal)$ such that $|\beta|>0$,
			$\Mt(\Scal,\alpha)=\Mt(\Scal,\alpha\beta)$, and $|\Mt(\Scal,\alpha
			\bar\beta)|>1$ for all $\bar\beta\sqsubset\beta$.
	\end{myenumerate}
	
\end{proposition}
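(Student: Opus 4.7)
The plan is to prove both directions through Proposition~\ref{prop3_spdet_FSA}, which recasts negation of strong periodic detectability as the existence, for every $k$, of an infinite-length event sequence $s_k$ and a prefix $s'\sqsubset s_k$ along which no $<k$-observation extension yields a singleton state estimate.

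For sufficiency I construct explicit witnesses. Under (i), splice a transition sequence from an initial state to $x$ with label $\gamma$ followed by $s_1 s_2^{\omega}$ to obtain some $s\in L^{\omega}(\Scal)$ whose label remains $\gamma$ forever; taking $s'$ to be the prefix ending at $x$, every $s''$ with $s's''\sqsubset s$ satisfies $\ell(s's'')=\gamma$, so $|\Mt(\Scal,\ell(s's''))|>1$ for every such $s''$, killing SPD for all $k$ via Proposition~\ref{prop3_spdet_FSA}. Under (ii), the crucial step is to promote the observer-level loop $\Mt(\Scal,\alpha)=\Mt(\Scal,\alpha\beta)$ into an actual transition-sequence cycle of $\Scal$ with label $\beta^m$ for some $m\ge 1$. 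On the finite set $\Mt(\Scal,\alpha)$ define the relation $R$ by $(z,y)\in R$ iff some $\beta$-labeled transition sequence of $\Scal$ runs from $z$ to $y$; the equality $\Mt(\Scal,\alpha)=\Mt(\Scal,\alpha\beta)$ forces every element of $\Mt(\Scal,\alpha)$ to have an $R$-predecessor inside $\Mt(\Scal,\alpha)$, and iterating predecessors in this finite set yields some $y_*\in\Mt(\Scal,\alpha)$ with a $\beta^m$-transition-sequence cycle at $y_*$. Prepending a path to $y_*$ of label $\alpha$ and appending this cycle $\omega$ times produces $s\in L^{\omega}(\Scal)$ with label $\alpha\beta^{\omega}$; by determinism of the observer, $\Mt(\Scal,\alpha\beta^j\bar\beta)=\Mt(\Scal,\alpha\bar\beta)$ for every $j\ge 0$ and $\bar\beta\sqsubset\beta$, and each such set is non-singleton by hypothesis, again violating Proposition~\ref{prop3_spdet_FSA}.

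For necessity I argue the contrapositive. Assume neither (i) nor (ii) holds; I claim SPD with $k:=2^{|X|}$. Fix $s\in L^{\omega}(\Scal)$ and $s'\sqsubset s$, and let $r$ be the number of observable events in $s$ strictly after $s'$. If $r\ge k-1$, the first $k-1$ of these observations generate $k$ consecutive observer states starting from $\Mt(\Scal,\ell(s'))$; since the observer has at most $2^{|X|}-1$ nonempty reachable states, two of the $k$ positions coincide, producing a reachable observer cycle of the form $\Mt(\Scal,\alpha)=\Mt(\Scal,\alpha\beta)$ with $|\beta|>0$, and failure of (ii) forces some $\Mt(\Scal,\alpha\bar\beta)$ along this cycle to be a singleton, which delivers an $s''$ with $|\ell(s'')|<k$ and $|\Mt(\Scal,\ell(s's''))|=1$. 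If instead $r<k-1$, take $s''$ consuming all remaining observable events of $s$, so $\gamma:=\ell(s's'')=\ell(s)$ is finite and the infinite unobservable tail of $s$ past $s's''$ starts from some $y\in\Mt(\Scal,\gamma)$ and must, by finiteness of $X$, contain a cycle $y\xrightarrow{s_1}x'\xrightarrow{s_2}x'$ with $s_1\in(T_{uo})^*$ and $s_2\in(T_{uo})^+$; failure of (i) applied to $\gamma$ and $y$ then forces $|\Mt(\Scal,\gamma)|=1$, while $|\ell(s'')|=r<k$.

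The main obstacle is welding the two disjuncts into a single uniform bound $k$ in the necessity direction: the infinite-label branch requires only a pigeonhole on the observer state space, but the finite-label branch must invoke failure of (i) to collapse an otherwise ambiguous terminal estimate to a singleton precisely where the observable activity ends. The choice $k=2^{|X|}$ handles both cases simultaneously because either the observable continuation past $s'$ is long enough for the observer-level pigeonhole to trigger, or it is short enough that consuming its entirety itself provides an admissible $s''$.
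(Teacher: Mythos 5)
Your proof is correct and follows essentially the same route as the paper's: the same witness constructions for sufficiency, and for necessity the same two pigeonhole arguments (on observer states when at least $k-1$ observations follow $s'$, and on $X$ along the eventually unobservable tail otherwise), merely phrased contrapositively with the explicit bound $k=2^{|X|}$ instead of deriving condition (i) from a witness of non-detectability with $k>|2^X|$ under the failure of (ii). A small bonus of your write-up is the predecessor-relation argument making explicit why the observer loop $\Mt(\Scal,\alpha)=\Mt(\Scal,\alpha\beta)$ lifts to an actual cycle of $\Scal$, i.e., why $\alpha\beta^{\omega}\in\LM^{\omega}(\Scal)$, a step the paper's proof only asserts.
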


\begin{proof}
	``if'': Assume \eqref{item1_spdet_FSA} holds. Then there exists a transition sequence
	$x_0\xrightarrow[]{s_{\gamma}}x\xrightarrow[]{s_1}x'\xrightarrow[]{s_2}x'$ 
	such that $x_0\in X_0$ and $\ell(s_{\gamma})=\gamma$. For every positive 
	integer $k$, choose $s_k=s_{\gamma}s_1(s_2)^{\omega}\in L^{\omega}(\Scal)$,
	then for every $s''\sqsubset s_1(s_2)^{\omega}$, one has $\ell(s'')=\ep$
	and $|\Mt(\Scal,\ell(s_{\gamma}
	s''))|=|\Mt(\Scal,\gamma)|>1$, which violates strong periodic detectability
	by Proposition~\ref{prop3_spdet_FSA}.

	Assume \eqref{item2_spdet_FSA} holds. Then $\alpha\beta^{\omega}\in\LM^{\omega}(\Scal)$.
	For every positive integer $k$, choose $s_k=s_{\alpha}s_{\beta}\in L^{\omega}(\Scal)$
	such that $\ell(s_\alpha)=\alpha$ and $\ell(s_\beta)=\beta^{\omega}$. Then for every $s_{\beta}'
	\sqsubset s_{\beta}$, one has $|\Mt(\Scal,\ell(s_\alpha s_{\beta}'))|>1$,
	which also violates strong periodic detectability by Proposition~\ref{prop3_spdet_FSA}.

	``only if'':  Assume $\Scal$ is not strongly periodically detectable and
	\eqref{item2_spdet_FSA} does not hold, next we prove \eqref{item1_spdet_FSA}
	holds.

	Since $\Scal$ is not strongly periodically detectable, by Proposition~\ref{prop3_spdet_FSA},
	choose integer $k>|2^X|$, $s_k\in L^{\omega}({\Scal})$, and
	prefix $s'\sqsubset s_k$ such that for all $s''\in T^*$, $s's''\sqsubset s_k$
	and $|\ell(s'')|<k$ imply $|\Mt(\Scal,\ell(s's''))|>1$. Since \eqref{item2_spdet_FSA}
	does not hold, one has $\ell(s_k)\in\Sig^*$ and $|\ell(s_k)|<k+|\ell(s')|$.
	Otherwise if $|\ell(s_k)|\ge k+|\ell(s')|$ or $\ell(s_k)\in\Sig^{\omega}$,
	we can choose $\bar s''$ such that $s'\bar s''\sqsubset s_k$ and $|\ell(\bar s'')|=
	k$, then by the Pigeonhole Principle and $k>|2^X|$,
	there exist $\bar s_1'',\bar s_2''\sqsubset
	\bar s''$ such that $|\ell(\bar s_1'')|<|\ell(\bar s_2'')|$ and $\Mt(\Scal,\ell(s'\bar
	s_1''))=\Mt(\Scal,\ell(s'\bar s_2''))$, that is, \eqref{item2_spdet_FSA} holds.
	Then $s_k=s'\hat s_1''\hat s_2''$, where $\hat s_1''\in T^*$, $\hat s_2''\in (T_{uo})^
	{\omega}$. Moreover, one has $|\Mt(\Scal,\ell(s'
	\hat s_1''))|>1$, and also by the Pigeonhole Principle
	there exists a transition sequence $x_0\xrightarrow[]{s'
	\hat s_1''}x\xrightarrow[]{\tilde s_1''}x'\xrightarrow[]{\tilde s_2''}x'$ for some
	$x_0\in X_0$, $x,x'\in X$, $\tilde s_1''\in (T_{uo})^*$, and $\tilde s_2''\in
	(T_{uo})^+$, i.e., \eqref{item1_spdet_FSA} holds.
\end{proof}

\begin{theorem}\label{thm3_spdet_FSA}
	An FSA $\Scal$ is
	not strongly periodically detectable if and only if in its observer $\Scal_{obs}$
	as in \eqref{observer_DES}, at least one of the two following conditions holds.
	\begin{myenumerate}
		\item\label{item3_spdet_FSA}
			There is a reachable state $q\in 2^X$ in $\Scal_{obs}$ and $x\in q$ such that $|q|>1$ and
			there is a transition sequence $x\xrightarrow[]{s_1}x'\xrightarrow[]{s_2}x'$ in $\Scal$ for 
			some $s_1\in (T_{uo})^*$, $s_2\in(T_{uo})^+$, $x'\in X$.
		\item\label{item4_spdet_FSA}
			There is a reachable transition cycle such that
			no state in the cycle is a singleton.

	\end{myenumerate}
\end{theorem}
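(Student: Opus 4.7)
The plan is to obtain Theorem~\ref{thm3_spdet_FSA} as a direct reformulation of Proposition~\ref{prop4_spdet_FSA} by translating its two conditions on $\Scal$ into the structural observer conditions \eqref{item3_spdet_FSA} and \eqref{item4_spdet_FSA}. The key fact powering the translation is that, by construction of $\Scal_{obs}$, the reachable states of $\Scal_{obs}$ are exactly the sets $\Mt(\Scal,\gamma)$ for $\gamma\in\LM(\Scal)$, and since $\Scal_{obs}$ is deterministic, reachability and cycles in $\Scal_{obs}$ admit a clean characterization in terms of $\Mt(\Scal,\cdot)$.

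First I would dispose of the equivalence between Proposition~\ref{prop4_spdet_FSA}~\eqref{item1_spdet_FSA} and \eqref{item3_spdet_FSA}: this is essentially a notational rewrite. Given $\gamma\in\LM(\Scal)$ with $|\Mt(\Scal,\gamma)|>1$, set $q:=\Mt(\Scal,\gamma)$, which is a reachable state of $\Scal_{obs}$ with $|q|>1$; the existence of $x\in q$ with $x\xrightarrow{s_1}x'\xrightarrow{s_2}x'$ in $\Scal$ (with $s_1\in(T_{uo})^*$, $s_2\in(T_{uo})^+$) is phrased identically in both statements, and the converse goes through the same identification $q=\Mt(\Scal,\gamma)$.

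Next I would handle the equivalence between \eqref{item2_spdet_FSA} and \eqref{item4_spdet_FSA}. For the forward direction, given $\alpha\beta\in\LM(\Scal)$ with $|\beta|>0$, $\Mt(\Scal,\alpha)=\Mt(\Scal,\alpha\beta)$, and $|\Mt(\Scal,\alpha\bar\beta)|>1$ for every $\bar\beta\sqsubset\beta$, set $q:=\Mt(\Scal,\alpha)$; by determinism of $\Scal_{obs}$, the pair $(q,\beta,q)\in\dt_{obs}$ is a reachable transition cycle, and each intermediate state traversed along this cycle equals $\Mt(\Scal,\alpha\bar\beta)$ for some $\bar\beta\sqsubset\beta$, hence is not a singleton. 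For the converse, given a reachable transition cycle $(q,\beta,q)$ in $\Scal_{obs}$ with no singleton state, pick any $\alpha\in\LM(\Scal)$ with $\Mt(\Scal,\alpha)=q$ (such $\alpha$ exists by reachability of $q$); then $\Mt(\Scal,\alpha\beta)=q=\Mt(\Scal,\alpha)$ and every $\Mt(\Scal,\alpha\bar\beta)$ for $\bar\beta\sqsubset\beta$ is precisely a state visited in the cycle, hence non-singleton by hypothesis.

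The only subtlety worth attention is the convention that $\sqsubset$ is reflexive (so $\bar\beta=\beta$ is allowed), which is exactly what makes the condition ``no state in the cycle is a singleton'' match $|\Mt(\Scal,\alpha\bar\beta)|>1$ for all $\bar\beta\sqsubset\beta$; once this bookkeeping is straight, the proof is a routine composition of these two equivalences with Proposition~\ref{prop4_spdet_FSA}. I do not anticipate any genuine obstacle; the content of the theorem is packaged entirely inside Proposition~\ref{prop4_spdet_FSA}, and Theorem~\ref{thm3_spdet_FSA} just lifts it to the observer level so that the condition becomes effectively checkable.
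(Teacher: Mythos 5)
Your proposal is correct and follows essentially the same route as the paper: the paper's proof simply observes that $\Mt(\Scal,\sigma)=\dt_{obs}(\Mt(\Scal,\ep),\sigma)$ for all $\sigma\in\LM(\Scal)$ and declares conditions \eqref{item3_spdet_FSA}, \eqref{item4_spdet_FSA} equivalent to \eqref{item1_spdet_FSA}, \eqref{item2_spdet_FSA} of Proposition~\ref{prop4_spdet_FSA}, which is exactly the translation you carry out (in somewhat more detail). No issues.
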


\begin{proof}
	By definition, one sees that for all $\sigma\in\LM(\Scal)$,
	$\Mt(\Scal,\sigma)=\dt_{obs}(\Mt(\Scal,\ep),\sigma)$. Then \eqref{item3_spdet_FSA} 
	(resp., \eqref{item4_spdet_FSA}) of this theorem is equivalent to \eqref{item1_spdet_FSA}
	(resp., \eqref{item2_spdet_FSA}) of Proposition~\ref{prop4_spdet_FSA}.
\end{proof}

Theorem~\ref{thm3_spdet_FSA} provides an exponential-time algorithm for verifying
strong periodic detectability of $\Scal$. Next we obtain a polynomial-time
verification algorithm by simplifying Theorem~\ref{thm3_spdet_FSA}.
To this end, we need to prove a relationship between $\Scal_{det}$ and $\Scal_{obs}$.

\begin{proposition}\label{prop5_spdet_FSA}
	Consider an FSA $\Scal$. For every transition $(q,\s,q')\in\dt_{obs}$,
	for every $\emptyset\ne\bar q'\subset q'$ satisfying $|\bar q'|=2$ if $|q'|\ge 2$,
	there is $\bar q\subset q$ such that 
	$(\bar q,\s,\bar q')\in\dt_{det}$, 
	where $|\bar q|=2$ if $|q|\ge2$.
\end{proposition}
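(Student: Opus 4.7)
The plan is to read $(q,\s,q')\in\dt_{obs}$ as the identity $q'=(\UR\circ\Reach_\s)(q)$, which holds because $\Scal_{obs}$ is the deterministic powerset construction and $q$ is reachable. From this, every $y\in q'$ admits at least one ``witness'' $x\in q$ with $y\in(\UR\circ\Reach_\s)(\{x\})$. The construction of $\bar q$ will just collect enough such witnesses, possibly padded by an arbitrary extra element of $q$, so that the resulting detector transition rule fires. Throughout I will use the monotonicity of $\UR\circ\Reach_\s$ and the identity $(\UR\circ\Reach_\s)(A\cup B)=(\UR\circ\Reach_\s)(A)\cup(\UR\circ\Reach_\s)(B)$.

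I would split on whether $|q'|\ge 2$ or $|q'|=1$. In the first case $\bar q'=\{y_1,y_2\}$ is a two-element subset of $q'$; I choose witnesses $x_1,x_2\in q$ with $y_i\in(\UR\circ\Reach_\s)(\{x_i\})$. If $x_1\neq x_2$, set $\bar q=\{x_1,x_2\}$; if $x_1=x_2$ and $|q|\ge 2$, pad with any $x^\ast\in q\setminus\{x_1\}$; if $x_1=x_2$ and $|q|=1$, set $\bar q=q$. In each sub-case one checks that $\{y_1,y_2\}\subset(\UR\circ\Reach_\s)(\bar q)$, so $|(\UR\circ\Reach_\s)(\bar q)|>1$ and clause~(1) of the detector definition delivers $(\bar q,\s,\bar q')\in\dt_{det}$.

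In the second case $|q'|=1$, the hypothesis forces $\bar q'=q'=\{y\}$. Since $q'$ is nonempty, pick any $x_0\in q$ with $(\UR\circ\Reach_\s)(\{x_0\})=\{y\}$. Take $\bar q=\{x_0\}$ if $|q|=1$, or $\bar q=\{x_0,x^\ast\}$ for any $x^\ast\in q\setminus\{x_0\}$ if $|q|\ge 2$. From $\bar q\subset q$ monotonicity yields $(\UR\circ\Reach_\s)(\bar q)\subset\{y\}$, while $x_0\in\bar q$ forces $y\in(\UR\circ\Reach_\s)(\bar q)$; hence $(\UR\circ\Reach_\s)(\bar q)=\{y\}=\bar q'$, which is exactly clause~(2) of the detector transition rule.

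The only real bookkeeping, and the main thing to get right, is ensuring that the cardinality prescription ``$|\bar q|=2$ if $|q|\ge 2$'' is simultaneously compatible with $\bar q$ being an admissible detector state (subsets of size $\le 2$, or the distinguished initial state) and with preserving the inclusions needed in each detector clause. This is where the padding step and the monotonicity of $\UR\circ\Reach_\s$ do all the work: enlarging $\bar q$ can only enlarge its image, so passing from a one-element $\bar q$ to a two-element one never destroys the containment $\bar q'\subset(\UR\circ\Reach_\s)(\bar q)$ in clause~(1), and in clause~(2) the image is pinned to $\{y\}$ from above by $q'=\{y\}$ and from below by $x_0$, so padding is harmless.
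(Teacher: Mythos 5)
Your proof is correct and follows essentially the same route as the paper's: extract per-element witnesses in $q$ for the elements of $\bar q'$, pad with an extra element of $q$ when the witnesses coincide, and then invoke the two clauses of the detector transition rule. The only difference is cosmetic — you phrase it via monotonicity of $\UR\circ\Reach_\s$ and spell out the $|q'|=1$ and $|q|=1$ corner cases, which the paper dismisses as ``similar.''
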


\begin{proof}
	We only need to prove the case $|q|\ge 2$ and $|q'|\ge 2$, the other cases hold similarly.
	Arbitrarily choose $\{x_1,x_2\}=\bar q'\subset q'$ such that $x_1\ne x_2$. By definition,
	either (1) there exists $x_3\in X$, $t_1,t_2\in T_o$, $s_1,s_2\in (T_{uo})^*$ such that
	$(x_3,t_1s_1,x_1),(x_3,t_2s_2,x_2)\in\dt$ and $\ell(t_1)=\ell(t_2)=\s$, or (2)
	there exist $x_4,x_5\in X$, $t_1,t_2\in T_o$, $s_1,s_2\in (T_{uo})^*$ such that
	$x_4\ne x_5$, $(x_4,t_1s_1,x_1),(x_5,t_2s_2,x_2)\in\dt$ and $\ell(t_1)=\ell(t_2)=\s$.
	If (1) holds, we choose $\bar q=\{x_3,x_6\}$, where $x_6\in q\setminus\{x_3\}$;
	if (2) holds, we choose $\bar q=\{x_4,x_5\}$. By definition, no matter (1) or (2)
	holds, one has $(\bar q,\s,\bar q')\in\dt_{det}$.
\end{proof}

\begin{theorem}\label{thm4_spdet_FSA}
	An FSA $\Scal$ is
	not strongly periodically detectable if and only if in its detector $\Scal_{det}$
	as in \eqref{detector_DES}, at least one of the two following conditions holds.
	\begin{myenumerate}
		\item\label{item5_spdet_FSA}
			There is a reachable state $q'\in Q$ and $x\in q'$ such that $|q'|>1$
			and there is a transition
			sequence $x\xrightarrow[]{s_1}x'\xrightarrow[]{s_2}x'$ in $\Scal$ for some
			$s_1\in (T_{uo})^*$, $s_2\in(T_{uo})^+$, $x'\in X$.
		\item\label{item6_spdet_FSA}
			There is a reachable transition cycle such that
			all states in the cycle have cardinality $2$.
	\end{myenumerate}
\end{theorem}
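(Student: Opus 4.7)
The plan is to derive Theorem~\ref{thm4_spdet_FSA} from Theorem~\ref{thm3_spdet_FSA} by translating each of its two observer-based conditions into the corresponding detector-based condition, using Proposition~\ref{prop5_spdet_FSA} as the bridge. Specifically, I will establish the equivalence of item \eqref{item3_spdet_FSA} with item \eqref{item5_spdet_FSA}, and of item \eqref{item4_spdet_FSA} with item \eqref{item6_spdet_FSA}.

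For the equivalence of \eqref{item3_spdet_FSA} and \eqref{item5_spdet_FSA}, the direction from detector to observer is immediate: any reachable detector state $q'$ reached via a label sequence $\s$ satisfies $q' \subset \Mt(\Scal,\s)$, so $q := \Mt(\Scal,\s)$ is a reachable observer state with $|q| \geq |q'| > 1$ and $x \in q$. For the converse, fix a label sequence $\s_1\cdots\s_m$ reaching $q =: q_m$ in $\Scal_{obs}$ and choose a size-$2$ subset $\bar q_m \subset q_m$ containing $x$. Applying Proposition~\ref{prop5_spdet_FSA} backward along the observer path $\Mt(\Scal,\ep) = q_0 \to q_1 \to \cdots \to q_m$ yields detector states $\bar q_{m-1},\ldots,\bar q_1$ together with detector transitions $\bar q_{i-1} \xrightarrow{\s_i} \bar q_i$ for $i \geq 2$. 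The initial detector transition $(\Mt(\Scal,\ep),\s_1,\bar q_1) \in \dt_{det}$ follows directly from the definition of $\Scal_{det}$ at its initial state, completing a detector path from $\Mt(\Scal,\ep)$ to $\bar q_m$, which contains $x$ and has cardinality $2$.

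For the equivalence of \eqref{item4_spdet_FSA} and \eqref{item6_spdet_FSA}, the detector-to-observer direction combines determinism of $\Scal_{obs}$ with pigeonhole. Let $p_0 \to \cdots \to p_n = p_0$ be a reachable detector cycle with each $|p_i| = 2$ and cycle label $\s = \s_1\cdots\s_n$, and let $\alpha$ reach $p_0$ in $\Scal_{det}$. Setting $r^{(j)} := \Mt(\Scal,\alpha\s^j)$ for $j \geq 0$, the containment $p_0 \subset r^{(j)}$ (together with an analogous containment at each intermediate position after $\s_1\cdots\s_k$ for $0 \leq k \leq n$) forces every observer state traversed to have cardinality at least $2$. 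Since $\Scal_{obs}$ is finite, two such observer states must coincide, yielding a reachable observer cycle with no singleton state. Conversely, given a reachable observer cycle $q_0 \to q_1 \to \cdots \to q_n = q_0$ with every $|q_i| \geq 2$, start from any size-$2$ subset $\bar q^{(0)} \subset q_0$ and iteratively apply Proposition~\ref{prop5_spdet_FSA} backward once around the cycle to produce $\bar q^{(1)}, \bar q^{(2)}, \ldots \subset q_0$, each a size-$2$ subset and each the starting point of a length-$n$ detector path terminating at $\bar q^{(k-1)}$. The pigeonhole principle applied to the finite set of size-$2$ subsets of $q_0$ then furnishes indices $i < j$ with $\bar q^{(i)} = \bar q^{(j)}$, so the concatenation of the $j-i$ corresponding traversals forms a detector cycle in which all states have cardinality $2$; its reachability follows by applying Proposition~\ref{prop5_spdet_FSA} backward along the observer path from $\Mt(\Scal,\ep)$ to $q_0$ targeted at $\bar q^{(j)}$, combined with the initial detector transition exactly as in the argument for \eqref{item5_spdet_FSA}.

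The main obstacle is this closure step: a single backward traversal of the observer cycle via Proposition~\ref{prop5_spdet_FSA} need not return to the starting size-$2$ subset of $q_0$, so the existence of a genuine cycle in $\Scal_{det}$ must be extracted via the pigeonhole iteration on the finite collection of size-$2$ subsets of $q_0$. Everything else amounts to matching observer and detector transitions through Proposition~\ref{prop5_spdet_FSA} and invoking the definition of $\Scal_{det}$ at its initial state.
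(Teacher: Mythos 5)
Your proposal is correct and follows essentially the same route as the paper: reduce to Theorem~\ref{thm3_spdet_FSA}, then prove \eqref{item3_spdet_FSA}$\Leftrightarrow$\eqref{item5_spdet_FSA} and \eqref{item4_spdet_FSA}$\Leftrightarrow$\eqref{item6_spdet_FSA} by applying Proposition~\ref{prop5_spdet_FSA} backward along observer paths and using the Pigeonhole Principle (on repeated cycle traversals) in both directions of the cycle equivalence, exactly as the paper does. Your explicit handling of the first detector transition out of $\Mt(\Scal,\ep)$ via the definition of $\dt_{det}$ is in fact slightly more careful than the paper's ``moving backward'' phrasing, but it is the same argument in substance.
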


\begin{proof}
	We use Theorem~\ref{thm3_spdet_FSA} and Proposition~\ref{prop5_spdet_FSA} to prove this 
	result.

	We first check \eqref{item5_spdet_FSA} is equivalent to \eqref{item3_spdet_FSA}.

	``$\Rightarrow$'': Assume \eqref{item5_spdet_FSA} holds.
	In $\Scal_{det}$, choose a transition sequence $\Mt(\Scal,\ep)\xrightarrow[]
	{\alpha}q'$. One then has $q'\subset\Mt(\Scal,\alpha)=\dt_{obs}(\Mt(\Scal,\ep),\alpha)$,
	hence \eqref{item3_spdet_FSA} of Theorem~\ref{thm3_spdet_FSA} holds.

	``$\Leftarrow$'': Assume \eqref{item3_spdet_FSA} holds.
	In $\Scal_{obs}$, choose a transition sequence $\Mt(\Scal,\ep)\xrightarrow[]
	{\alpha}q$. By Proposition~\ref{prop5_spdet_FSA}, moving backward on $\Mt(\Scal,\ep)\xrightarrow[]
	{\alpha}q$ from $q$ to $\Mt(\Scal,\ep)$, we obtain a transition sequence
	$\Mt(\Scal,\ep)\xrightarrow[]{\alpha}q'$ of $\Scal_{det}$ such that $x\in q'\subset q$ and $|q'|>1$,
	i.e., \eqref{item5_spdet_FSA} holds.

	We secondly check \eqref{item6_spdet_FSA} is equivalent to \eqref{item4_spdet_FSA}.

	``$\Rightarrow$'': Assume \eqref{item6_spdet_FSA} holds.
	In $\Scal_{det}$, choose a transition sequence $\Mt(\Scal,\ep)\xrightarrow[]{\alpha}
	q\xrightarrow[]{\beta}q$ such that in $q\xrightarrow[]{\beta}q$
	all states are of cardinality $2$ and $|\beta|>0$. Without loss of generality, we assume 
	$|\beta|>|2^{X}|$, because otherwise we can repeat $q\xrightarrow[]{\beta}q$ for
	$|2^{X}|+1$ times. By definition, one has for all
	$\beta'\sqsubset\beta$, $|\Mt(\Scal,\alpha\beta')|>1$. Then by the Pigeonhole Principle,
	there exist $\beta_1,\beta_2\sqsubset\beta$ such that $|\beta_1|<|\beta_2|$
	and $\Mt(\Scal,\alpha\beta_1)=\Mt(\Scal,\alpha\beta_2)$. Then in observer $\Scal_{obs}$,
	one has $\dt_{obs}(\Mt(\Scal,\ep),\alpha\beta_1)=\Mt(\Scal,\alpha\beta_1)=\Mt(\Scal,\alpha\beta_2)=
	\dt_{obs}(\Mt(\Scal,\ep),\alpha\beta_2)$, and for every $\beta'\sqsubset\beta$, 
	$\dt_{obs}(\Mt(\Scal,\ep),\alpha\beta')=\Mt(\Scal,\alpha\beta')$ has cardinality $>1$.
	Thus, \eqref{item4_spdet_FSA} of Theorem~\ref{thm3_spdet_FSA} holds.

	``$\Leftarrow$'': Assume \eqref{item4_spdet_FSA} holds.
	In $\Scal_{obs}$, choose a transition sequence $\Mt(\Scal,\ep)\xrightarrow[]{\alpha}
	q_1\xrightarrow[]{\beta_1}\cdots\xrightarrow[]{\beta_n}q_{n+1}$ such that $n\ge|X|^2$, 
	$q_1=q_{n+1}$, $|q_1|,\dots,|q_{n+1}|>1$, and $\beta_1,\dots,\beta_n\in\Sig$.
	By Proposition~\ref{prop5_spdet_FSA} from $n+1$ to $2$,
	we obtain $q_i'\subset q_i$ for all $i\in\llb 1,n+1\rrb$
	such that $|q_1'|=\cdots=|q_{n+1}'|=2$ and a transition sequence
	$q_1'\xrightarrow[]{\beta_1}\cdots\xrightarrow[]{\beta_n}q_{n+1}'$ of $\Scal_{det}$.
	Moreover, by Proposition~\ref{prop5_spdet_FSA}, we obtain a transition sequence 
	$\Mt(\Scal,\ep)\xrightarrow[]{\alpha}q_1'$ of $\Scal_{det}$. By the Pigeonhole Principle,
	\eqref{item6_spdet_FSA} holds.
\end{proof}

In order to check condition \eqref{item6_spdet_FSA}, one could firstly use Tarjan algorithm to
compute all reachable strongly connected components of $\Scal_{det}$, which takes time linear 
in the size of $\Scal_{det}$; secondly at each component, remove all singletons
and then check whether there is a cycle. If and only if in some reachable component, such a cycle
exists, \eqref{item6_spdet_FSA} holds. Hence Theorem~\ref{thm4_spdet_FSA} provides a polynomial-time
algorithm for verifying strong periodic detectability. 
Moreover, Theorem~\ref{thm4_spdet_FSA} also implies an $\NL$ upper bound for strong periodic detectability. 

\begin{theorem}\label{thm5_spdet_FSA}
	The problem of verifying strong periodic detectability of FSA $\Scal$ belongs to $\NL$.
\end{theorem}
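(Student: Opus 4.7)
The plan is to verify the negation of strong periodic detectability in $\NL$ and then invoke the Immerman--Szelepcs\'enyi theorem ($\NL=\coNL$). By Theorem~\ref{thm4_spdet_FSA}, non-SPD is equivalent to at least one of clauses~\eqref{item5_spdet_FSA} or~\eqref{item6_spdet_FSA} holding in the detector $\Scal_{det}$, so it suffices to certify either clause nondeterministically in log space.

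The key structural observation is that every state of $\Scal_{det}$ other than its initial state has cardinality at most $2$ and is representable in $O(\log|X|)$ bits, so $\Scal_{det}$ can be simulated without being materialised. Moreover, a detector transition $(q,\s,q')$ with $|q'|=2$ necessarily falls under case~(1) of the definition of $\dt_{det}$, which is a purely existential condition on $\Scal$: both elements of $q'$ must be reachable from some element of $q$ by an observable $\s$-transition followed by unobservable transitions. This reduces to a constant number of $\NL$ reachability queries in the subgraphs of $\Scal$ induced by $T_o$ and $T_{uo}$, so the subgraph of $\Scal_{det}$ spanned by cardinality-$2$ states has an $\NL$-decidable edge relation; the same holds for the first edge out of $\Mt(\Scal,\ep)$, since membership in $\UR(X_0)$ is an $\NL$ reachability query from $X_0$.

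With this in hand, the nondeterministic log-space procedure for non-SPD first guesses which clause to certify. For~\eqref{item5_spdet_FSA}, it guesses $x\in X$ together with some $\hat x\ne x$, checks that $\{x,\hat x\}$ is reachable in $\Scal_{det}$ (or, as a special case, that $x,\hat x\in\UR(X_0)$, covering the situation where the witnessing detector state is $\Mt(\Scal,\ep)$ itself), then guesses $x'\in X$ and verifies $x\xrightarrow[]{s_1}x'$ and $x'\xrightarrow[]{s_2}x'$ for some $s_1\in(T_{uo})^*$ and $s_2\in(T_{uo})^+$ by two $\NL$ reachability queries in the unobservable subgraph of $\Scal$. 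For~\eqref{item6_spdet_FSA}, it guesses a cardinality-$2$ state $q$, certifies its reachability in $\Scal_{det}$, then guesses a nonempty closed walk from $q$ back to itself through cardinality-$2$ states, maintaining only the current state and a step counter bounded by $|Q|$.

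Composing these pieces, non-SPD lies in $\NL$, and hence SPD itself lies in $\coNL=\NL$. The main obstacle is the nesting of nondeterminism between the outer walk through $\Scal_{det}$ and the inner $\NL$ witnesses for each edge; this is resolved because every detector transition actually needed is an existential $\NL$ query on $\Scal$, so log-space composition collapses the nested procedures into a single $\NL$ machine.
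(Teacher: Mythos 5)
Your overall strategy is the same as the paper's: characterise non-SPD via conditions \eqref{item5_spdet_FSA} and \eqref{item6_spdet_FSA} of Theorem~\ref{thm4_spdet_FSA}, certify them by guess-and-check on $\Scal_{det}$ without materialising it (its non-initial states fit in $O(\log|X|)$ bits), and finish with $\NL=\coNL$. However, there is a genuine gap in how you certify reachability of a pair $\{x,\hat x\}$ in $\Scal_{det}$. You restrict the guessed walk to the subgraph spanned by cardinality-$2$ states (plus the first edge out of $\Mt(\Scal,\ep)$), justified by the claim that ``every detector transition actually needed is an existential $\NL$ query.'' That claim is false: a cardinality-$2$ detector state may be reachable only through non-initial singleton states, and transitions \emph{into} singletons are case~(2) transitions of $\dt_{det}$, which require $(\UR\circ\Reach_\sigma)(q)$ to be \emph{exactly} a singleton --- a condition with a universal part, not a purely existential one. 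Concretely, take $X_0=\{x_0\}$, all events observable, with transitions $x_0\xrightarrow{a}x_1$, $x_1\xrightarrow{b}x_2$, $x_1\xrightarrow{b}x_3$, and observable self-loops on $x_2,x_3$ with a common label $c$. Then \eqref{item6_spdet_FSA} holds at the cycle on $\{x_2,x_3\}$, so the FSA is not SPD, but the only detector path to $\{x_2,x_3\}$ is $\{x_0\}\xrightarrow{a}\{x_1\}\xrightarrow{b}\{x_2,x_3\}$, which passes through the singleton $\{x_1\}$. Your walk never reaches $\{x_2,x_3\}$ (and $x_2,x_3\notin\UR(X_0)$), so your nondeterministic machine for non-SPD rejects a violating instance; the same defect hits the reachability check inside your certificate for \eqref{item5_spdet_FSA}.

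The gap is repairable without changing your architecture. Either allow singleton intermediate states in the walk and verify case-(2) edges inside the simulation, using $\NL=\coNL$ for the universal part (``no second state lies in $(\UR\circ\Reach_\sigma)(q)$''); or bypass the detector walk: $\{x,\hat x\}$ with $x\ne\hat x$ is reachable in $\Scal_{det}$ if and only if $x,\hat x\in\Mt(\Scal,\alpha)$ for some common label sequence $\alpha$, i.e., iff $(x,\hat x)$ is reachable in the self-composition $\CCa(\Scal)$ --- correctness follows because detector states reached via $\alpha$ are subsets of $\Mt(\Scal,\alpha)$, while Proposition~\ref{prop5_spdet_FSA} gives the converse inclusion for two-element subsets; this is a plain $\NL$ reachability query. (A third fix is to relax the edge relation to ``$q'$ is any nonempty subset of $(\UR\circ\Reach_\sigma)(q)$ with $|q'|\le 2$'', which is existential and, for cardinality-$2$ targets, reaches exactly the detector-reachable pairs --- but that correctness argument again needs the observer connection you did not invoke.) For comparison, the paper's proof simply guesses and checks ``$\{x,x''\}$ is reachable in $\Scal_{det}$'' by nondeterministic search over the full detector, without restricting to existential edges.
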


\begin{proof}
	We only need to prove that both \eqref{item5_spdet_FSA} and \eqref{item6_spdet_FSA} of
	Theorem~\ref{thm4_spdet_FSA} can be verified in $\NL$. Then by $\NL=\coNL$, this theorem holds.
	We do not need to compute the whole $\Scal_{det}$.

	For \eqref{item5_spdet_FSA}: Guess states $x,x',x''\in X$, check (\romannumeral1)
	$x\ne x''$, (\romannumeral2) $x,x''\in
	\Mt(\Scal,\ep)$ or $\{x,x''\}$ is reachable in $\Scal_{det}$, (\romannumeral3) $x'$ is equal to $x$ or 
	there is an unobservable transition sequence from $x$ to $x'$ in $\Scal$, 
	and (\romannumeral4) there is an unobservable transition cycle from $x'$ to itself, all by nondeterministic 
	search.

	For \eqref{item6_spdet_FSA}: Guess different states $x,\bar x\in X$, check (\romannumeral1)
	$\{x,\bar x\}$ is reachable in $\Scal_{det}$, (\romannumeral2)
	$\{x,\bar x\}$ belongs to a transition cycle whose 
	states all have cardinality $2$.
\end{proof}

\begin{example}\label{exam1_spdet_FSA}
	We give two examples to illustrate Theorem~\ref{thm4_spdet_FSA}.
	Consider two FSAs $\Scal_1$ and $\Scal_2$ shown in Fig.~\ref{fig1_spdet_FSA}.
	One sees that $\Scal_1$ satisfies Assumption~\ref{assum1_Det_PN}. However, $\Scal_2$ does not
	satisfy Assumption~\ref{assum1_Det_PN}, as $x_1$ is a deadlock (violating \eqref{item11_Det_PN}
	of Assumption~\ref{assum1_Det_PN}), and there is a
	reachable unobservable transition cycle $x_2\xrightarrow[]{t_4}x_2$ (violating \eqref{item12_Det_PN}
	of Assumption~\ref{assum1_Det_PN}).
	\begin{figure}[!htpb]
			\tikzset{global scale/.style={
    scale=#1,
    every node/.append style={scale=#1}}}
		\begin{center}
			\begin{tikzpicture}[global scale = 1.0,
				>=stealth',shorten >=1pt,thick,auto,node distance=2.5 cm, scale = 0.8, transform shape,
	->,>=stealth,inner sep=2pt,
				every transition/.style={draw=red,fill=red,minimum width=1mm,minimum height=3.5mm},
				every place/.style={draw=blue,fill=blue!20,minimum size=7mm}]
				\tikzstyle{emptynode}=[inner sep=0,outer sep=0]
				\node[state, initial, initial where = right] (x0) {$x_0$};
				\node[state] (x1) [left of = x0] {$x_1$};
				\node[state] (x2) [below of = x0] {$x_2$};

				\path[->]
				(x0) edge node [above, sloped] {$t_1(a)$} (x1)
				(x0) edge node [above, sloped] {$t_2(a)$} (x2)
				(x1) edge [loop below] node [below, sloped] {$t_3(a)$} (x1)
				(x2) edge [loop right] node [above, sloped] {$t_4(a)$} (x2)
				;

				\node[state] (x1') [right of = x0] {$x_1$};
				\node[state, initial, initial where = right] (x0') [right of = x1'] {$x_0$};
				\node[state] (x2') [below of = x0'] {$x_2$};

				\path[->]
				(x0') edge node [above, sloped] {$t_1(a)$} (x1')
				(x0') edge node [above, sloped] {$t_2(a)$} (x2')
				(x2') edge [loop right] node [above, sloped] {$t_4(\ep)$} (x2')
				;

			\end{tikzpicture}
	\end{center}
	\caption{FSA $\Scal_1$ (left) and FSA $\Scal_2$ (right), where a state with an input arrow from nowhere 
	is initial (e.g., $x_0$), the letters beside arrows outside $()$ denote events, the letters in $()$ denote
	the corresponding labels/outputs.} 
	\label{fig1_spdet_FSA}
	\end{figure}

	\begin{figure}[!htpb]
			\tikzset{global scale/.style={
    scale=#1,
    every node/.append style={scale=#1}}}
		\begin{center}
			\begin{tikzpicture}[global scale = 1.0,
				>=stealth',shorten >=1pt,thick,auto,node distance=2.5 cm, scale = 0.8, transform shape,
	->,>=stealth,inner sep=2pt,
				every transition/.style={draw=red,fill=red,minimum width=1mm,minimum height=3.5mm},
				every place/.style={draw=blue,fill=blue!20,minimum size=7mm}]
				\tikzstyle{emptynode}=[inner sep=0,outer sep=0]
				\node[elliptic state, initial, initial where = above] (x0) {$\{x_0\}$};
				\node[elliptic state] (x2) [right of = x0] {$\{x_1,x_2\}$};

				\path[->]
				(x0) edge node [above, sloped] {$a$} (x2)
				(x2) edge [loop above] node {$a$} (x2)
				;

				\node[elliptic state, initial, initial where = above] (x0') [right of = x2] {$\{x_0\}$};
				\node[elliptic state] (x2') [right of = x0'] {$\{x_1,x_2\}$};

				\path[->]
				(x0') edge node [above, sloped] {$a$} (x2')
				;

			\end{tikzpicture}
	\end{center}
	\caption{Detectors $\Scal_{1det}$ (left, the same as observer $\Scal_{1obs}$)
	and $\Scal_{2det}$ (right, the same as observer $\Scal_{2obs}$)
	of FSA $\Scal_1$ and FSA $\Scal_2$ shown in Fig.~\ref{fig1_spdet_FSA}.}
	\label{fig2_spdet_FSA}
	\end{figure}

	Their detectors $\Scal_{1det}$ and $\Scal_{2det}$ are shown in Fig.~\ref{fig2_spdet_FSA}.
	One sees $\Scal_{1det}$ satisfies \eqref{item6_spdet_FSA} of Theorem~\ref{thm4_spdet_FSA}
	because there is a self-loop on reachable state $\{x_1,x_2\}$,
	but does not satisfy \eqref{item5_spdet_FSA} because
	$\{x_1,x_2\}$ is the unique reachable state of cardinality $2$ and
	in $\Scal_1$
	there is no infinitely long unobservable transition sequence starting at $x_1$, the same for $x_2$.
	$\Scal_{2det}$ satisfies \eqref{item5_spdet_FSA} because $\{x_1,x_2\}$ is reachable 
	in $\Scal_{2det}$ and in $\Scal_2$, starting at $x_2$ there is an infinite-length unobservable transition sequence,
	but does not satisfy \eqref{item6_spdet_FSA} because there is no cycle all of whose states are of
	cardinality $2$. Hence by Theorem~\ref{thm4_spdet_FSA},
	neither $\Scal_1$ nor $\Scal_2$ is strongly periodically detectable.
\end{example}

\begin{remark}\label{rem1_spdet_FSA}
	By Example~\ref{exam1_spdet_FSA}, one sees that \eqref{item5_spdet_FSA} and 
	\eqref{item6_spdet_FSA} do not imply each other.
	So they cannot take the place of each other when verifying strong periodic
	detectability. 
	Let us compare Theorem~\ref{thm4_spdet_FSA} with Proposition~\ref{prop1_spdet_FSA}.
	One directly sees that the equivalent condition for strong periodic detectability
	under Assumption~\ref{assum1_Det_PN} shown in Proposition~\ref{prop1_spdet_FSA}
	is exactly negation of \eqref{item6_spdet_FSA}.
	By Proposition~\ref{prop1_spdet_FSA}, $\Scal_2$ is strongly periodically detectable vacuously.
	Then Proposition~\ref{prop1_spdet_FSA} does not
	always work correctly if Assumption~\ref{assum1_Det_PN} is not satisfied.
\end{remark}

Next we show that a slight variant of the concurrent-composition structure can also provide
an $\NL$ upper bound for strong periodic detectability. The concurrent-composition
structure has essentially different features compared with the detector structure. On the 
one hand, a detector
tracks output sequences and collects all states between only unobservable transitions and divides
them into subsets of cardinality $2$. So a detector does not reflect information in
unobservable transitions. However, the concurrent-composition structure can do that.
On the other hand, a concurrent composition collects all pairs of transition sequences generating the same
output sequence, but sometimes does not collect different transitions starting at
the same state. However, a detector can do that. For example, consider states $x_1,x_2,x_3,x_4$ 
such that $x_1\ne x_2$ and $x_3\ne x_4$, there exist transitions $x_1\xrightarrow[]{t_1}x_3$,
$x_1\xrightarrow[]{t_2}x_4$
satisfying $\ell(t_1)=\ell(t_2)\ne\ep$, but there is no transition $x_2\xrightarrow[]{t_3}x_4$
satisfying $\ell(t_3)=\ell(t_1)$. Then in $\Scal_{det}$ there is a transition $\{x_1,x_2\}
\xrightarrow[]{\ell(t_1)}\{x_3,x_4\}$, but in $\CCa(\Scal)$ there is no transition $(x_1,x_2)
\xrightarrow[]{t'}(x_3,x_4)$ for any $t'\in T'$ satisfying $\ell(t')=\ell(t_1)$.
Next we add additional transitions into $\CCa(\Scal)$ to remove this drawback of $\CCa(\Scal)$
so that a verification algorithm for strong periodic detectability could be derived.

Consider an FSA $\Scal$ as in \eqref{DES_Det_NDES} and its self-composition
$\CCa(\Scal)$ as in \eqref{ConCom_Det_PN}. We construct a variant
\begin{equation}\label{ConCom_Rever_Det_PN}
	\CCaLeft(\Scal)=(X',T'\cup\{\varepsilon\},X_0',\dt'_{\leftarrow_\ep})
\end{equation}
from $\CCa(\Scal)$ as follows: For all $x_1,x_2,x_3,x_4\in X$, and $t'\in T'$ such that
$x_1\ne x_2$, $((x_1,x_1),t',(x_3,x_4))\in\dt'$ (resp., $((x_2,x_2),t',(x_3,x_4))\in\dt'$),
but $((x_1,x_2),\bar t',(x_3,x_4))\notin\dt'$ for any $\bar t'\in T'$,
add transition $((x_1,x_2),\varepsilon,(x_1,x_1))$ (resp., $((x_1,x_2),\varepsilon,(x_2,x_2))$),
where we let $\ell(\varepsilon)=\ep$. We call $\CCaLeft(\Scal)$ \emph{$\varepsilon$-extended
self-composition} of $\Scal$.

One can see the following proposition.
\begin{proposition}\label{prop8_spdet_FSA}
	Consider an FSA $\Scal$ as in \eqref{DES_Det_NDES}, its observer $\Scal_{obs}$
	as in \eqref{observer_DES}, its detector $\Scal_{det}$
	as in \eqref{detector_DES}, and its $\varepsilon$-extended self-composition
	$\CCaLeft(\Scal)$ as in \eqref{ConCom_Rever_Det_PN}. Assume states $x_1,x_2,x_3,x_4\in X$
	such that $x_1\ne x_2$ and $x_3\ne x_4$. The following hold.
	\begin{myenumerate}
		\item\label{item11_spdet_FSA}
			For every transition $\{x_1,x_2\}\xrightarrow[]{\s}\{x_3,x_4\}$ in $\Scal_{det}$,
			there is an observable transition sequence $(x_1,x_2)\xrightarrow[]{s'}(x_3,x_4)$
			or $(x_1,x_2)\xrightarrow[]{s'}(x_4,x_3)$ in $\CCaLeft(\Scal)$ such that $\ell(s')=\s$.
		\item\label{item12_spdet_FSA}
			For every transition $\{x_1,x_2\}\xrightarrow[]{\s}\{x_3\}$ in $\Scal_{det}$,
			there is an observable transition sequence $(x_1,x_2)\xrightarrow[]{s'}(x_3,x_3)$
			in $\CCaLeft(\Scal)$ such that $\ell(s')=\s$.
		\item\label{item13_spdet_FSA}
			For every transition $\{x_1\}\xrightarrow[]{\s}\{x_3,x_4\}$ in $\Scal_{det}$,
			there is an observable transition sequence $(x_1,x_1)\xrightarrow[]{s'}(x_3,x_4)$
			in $\CCaLeft(\Scal)$ such that $\ell(s')=\s$.
		\item\label{item14_spdet_FSA}
			For every transition $\{x_1\}\xrightarrow[]{\s}\{x_3\}$ in $\Scal_{det}$,
			there is an observable transition sequence $(x_1,x_1)\xrightarrow[]{s'}(x_3,x_3)$
			in $\CCaLeft(\Scal)$ such that $\ell(s')=\s$.
		\item\label{item15_spdet_FSA}
			In $\CCaLeft(\Scal)$, consider an arbitrary transition sequence $x_0'\xrightarrow[]{s_0'}
			x_1'\xrightarrow[]{t_1'}x_2'\xrightarrow[]{s_1'}\cdots\xrightarrow[]{t_n'}x_{2n}'
			\xrightarrow[]{s_n'}x_{2n+1}'$, where $x_0'\in X_0'$, $x_1',\dots,x_{2n+1}'\in X'$,
			$s_0',\dots,s_n'\in(T_{uo}'\cup\{\varepsilon\})^*$, $t_1',\dots,t_n'\in T_o'$.
			For every $i\in\llb 0,n\rrb$, denote the union of all states of unobservable transition
			sequence $x_{2i}\xrightarrow[]{s_i'}x_{2i+1}$ by $q_i$, then we obtain a sequence
			$q_0\xrightarrow[]{\ell(t_1')}\cdots \xrightarrow[]{\ell(t_n')}q_n$. Then for every
			$i\in\llb 1,n\rrb$, there exists $\bar q_i\supset q_i$ such that $\Mt(\Scal,\ep)\xrightarrow[]{\ell(t_1')}
			\bar q_1\xrightarrow[]{\ell(t_2')}\cdots \xrightarrow[]{\ell(t_n')}\bar q_n$
			is a transition sequence of $\Scal_{obs}$. 
	\end{myenumerate}
\end{proposition}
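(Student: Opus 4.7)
My plan is to unpack each of the five parts via the detector's defining relation: $(q,\sigma,q') \in \dt_{det}$ arises from $(\UR \circ \Reach_\sigma)(q)$, so for each element of $q'$ there is a concrete witness path $x \xrightarrow{t} z \xrightarrow{u} x^*$ in $\Scal$ with $x \in q$, $x^* \in q'$, $\ell(t)=\sigma$, and $u \in (T_{uo})^*$ (no unobservable prefix is needed, because detector and observer states are already closed under $\UR$). Each witness path lifts to a pair-path in the self-composition by pairing the observable step $t$ with another observable event $t'$ of the same label $\sigma$ and running the unobservable suffixes on both sides using the asymmetric pair-transitions $(\tilde t,\ep)$ and $(\ep,\tilde t)$ of $\CCa(\Scal)$.

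For part \eqref{item14_spdet_FSA}, a single witness path $x_1 \xrightarrow{t} z \xrightarrow{u} x_3$ lifts to $(x_1,x_1) \xrightarrow{(t,t)} (z,z) \xrightarrow{\text{unobs}} (x_3,x_3)$ entirely inside $\CCa(\Scal)$. For part \eqref{item13_spdet_FSA}, two witness paths from $x_1$ combine into a single pair-path $(x_1,x_1) \to (x_3,x_4)$ by running one in the left component and the other in the right. For parts \eqref{item11_spdet_FSA} and \eqref{item12_spdet_FSA} the source pair $(x_1,x_2)$ has $x_1 \neq x_2$, and the two witnesses are either anchored at different source states (direct lift in $\CCa(\Scal)$ gives $(x_3,x_4)$, or $(x_4,x_3)$ after swapping which witness is on the left), or both anchored at the same source, say $x_1$. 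In the anchored case I prepend the $\varepsilon$-edge $((x_1,x_2),\varepsilon,(x_1,x_1))$ and then apply the diagonal lift from \eqref{item13_spdet_FSA}/\eqref{item14_spdet_FSA}; that $\varepsilon$-edge is supplied by $\CCaLeft(\Scal)$ because the first pair-transition of the lifted path out of $(x_1,x_1)$ lands at a target unreachable from $(x_1,x_2)$ in a single step, which is precisely the obstruction that forced the anchoring.

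For part \eqref{item15_spdet_FSA}, set $\bar q_i := \Mt(\Scal, \ell(t_1')\cdots\ell(t_i'))$, so that $\bar q_0 = \Mt(\Scal,\ep)$ and $\Mt(\Scal,\ep) \xrightarrow{\ell(t_1')} \bar q_1 \xrightarrow{\ell(t_2')} \cdots \xrightarrow{\ell(t_n')} \bar q_n$ is the unique observer transition sequence determined by the label prefix. The claim reduces to $q_i \subseteq \bar q_i$ for every $i$, which I prove by induction on $i$ with the stronger invariant that every pair-state $(a,b)$ visited in $\CCaLeft(\Scal)$ before the $(i{+}1)$-th observable pair-transition satisfies $\{a,b\} \subseteq \bar q_i$. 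Standard unobservable pair-transitions move exactly one component by an unobservable event of $\Scal$ and thus preserve the invariant by closure of $\Mt(\Scal,\cdot)$ under $\UR$; the $\varepsilon$-jumps $((a,b),\varepsilon,(a,a))$ and $((a,b),\varepsilon,(b,b))$ overwrite one component by the value of the other, which already satisfies the invariant; and each observable pair-transition $t_i'$ advances both components along matching observable transitions in $\Scal$ with label $\ell(t_i')$, which is exactly the observer step $\bar q_{i-1} \xrightarrow{\ell(t_i')} \bar q_i$.

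The main obstacle is the case analysis underlying the anchored sub-case of parts \eqref{item11_spdet_FSA}--\eqref{item12_spdet_FSA}: I need to show that if all available witnesses are anchored at $x_1$ (respectively $x_2$), then the first pair-transition $(t_1,t_2)$ of the lift from $(x_1,x_1)$ arrives at some $(z_1,z_2)$ unreachable from $(x_1,x_2)$ in one step, so that the defining condition of the $\varepsilon$-extension inserts the edge $((x_1,x_2),\varepsilon,(x_1,x_1))$ we need. The argument is a small case split: if $x_2$ had an observable $\sigma$-transition into any state whose unobservable reach contains $x_4$ (respectively $x_3$), we could rebalance the witness pair so that its legs are anchored at different sources, contradicting anchoring. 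Everything else in the proposition is a routine unpacking of definitions.
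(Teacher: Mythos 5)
Your overall route coincides with the paper's: unpack the detector transition into witness paths of the form $x\xrightarrow[]{t}z\xrightarrow[]{u}x^*$, lift a pair of witnesses into $\CCa(\Scal)$, invoke the $\varepsilon$-edge $((x_1,x_2),\varepsilon,(x_1,x_1))$ only in the sub-case where both witnesses are anchored at the same source, and prove \eqref{item15_spdet_FSA} by taking $\bar q_i=\Mt(\Scal,\ell(t_1')\cdots\ell(t_i'))$ with a straightforward invariant (the paper dismisses \eqref{item15_spdet_FSA} as ``directly follows from definition''; your induction is fine, and your diagonal lift even avoids the mid-path $\varepsilon$-step the paper uses in its Case~4). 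The genuine gap is in your justification of the initial $\varepsilon$-edge. The defining condition of $\CCaLeft(\Scal)$ is label-blind: $((x_1,x_2),\varepsilon,(x_1,x_1))$ is added only if some transition of $\dt'$ out of $(x_1,x_1)$ reaches a target reached by \emph{no} $\bar t'\in T'$ out of $(x_1,x_2)$, where $\bar t'$ ranges over observable pairs of \emph{any} label as well as the unobservable pairs. Your rebalancing argument only rules out a $\s$-labelled match from $x_2$ (which would contradict anchoring); it does not show that the target $(z_1,z_2)$ of $(t_1,t_2)$ is one-step unreachable from $(x_1,x_2)$, since it may be reached by a pair carrying a different label. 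Concretely: let $x_1\xrightarrow[]{t_1}z_1$, $x_1\xrightarrow[]{t_2}z_2$ with $\ell(t_1)=\ell(t_2)=\s$, $z_1\xrightarrow[]{u_1}x_3$, $z_2\xrightarrow[]{u_2}x_4$ unobservably, let $x_2$ have no $\s$-transition, and let both $x_1$ and $x_2$ have $b$-labelled transitions to both $z_1$ and $z_2$ for some $b\ne\s$. Then $\{x_1,x_2\}\xrightarrow[]{\s}\{x_3,x_4\}\in\dt_{det}$ and both witnesses are anchored at $x_1$, yet every one-step successor of $(x_1,x_1)$ and of $(x_2,x_2)$ is also a one-step successor of $(x_1,x_2)$, so no $\varepsilon$-edge leaves $(x_1,x_2)$; every path out of $(x_1,x_2)$ in $\CCaLeft(\Scal)$ then begins with label $b$, so no $\s$-labelled path to $(x_3,x_4)$ or $(x_4,x_3)$ exists.

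So the step you yourself flag as the main obstacle is indeed the crux, but the case split you offer does not close it: under the literal reading of the $\varepsilon$-extension the needed edge can be absent, and the anchored sub-case of \eqref{item11_spdet_FSA}--\eqref{item12_spdet_FSA} cannot be completed this way; it would go through only under a label-restricted reading in which the edge is added whenever no $\bar t'$ with $\ell(\bar t')=\ell(t')$ reaches the same target. For comparison, the paper's proof simply writes down the Case~4 path $(x_1,x_2)\xrightarrow[]{\varepsilon}(x_1,x_1)\xrightarrow[]{(t_1,t_2)}(x_5,x_6)\xrightarrow[]{s_1'}(x_7,x_8)\xrightarrow[]{\varepsilon}(x_7,x_7)\xrightarrow[]{s_2'}(x_3,x_4)$ and asserts the $\varepsilon$-transitions exist ``by definition'' without checking their side condition; you have correctly isolated the delicate point the paper glosses over, but the argument you propose for it fails.
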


\begin{proof}
	\eqref{item11_spdet_FSA} We need to consider four different cases of transition
	sequences in $\Scal$ (shown in Figs.~\ref{fig:Case1&2}, \ref{fig:Case3&4})
	that form the transition $\{x_1,x_2\}\xrightarrow[]{\s}\{x_3,x_4\}$ in $\Scal_{det}$,
	where in these figures, $t_1,t_2\in T_o$, $\ell(t_1)=\ell(t_2)=\sigma$, $s_1,s_2,s_3,s_4\in (T_{uo})^{*}$,
	$x_5,x_6,x_7,x_8\in X$.
	\begin{figure}[!htbp]
		\begin{center}
			\begin{tikzcd}
				x_1 \arrow{r}{t_1} & x_5 \arrow{r}{s_1} & x_3 &
				x_1 \arrow{r}{t_1} & x_5 \arrow{r}{s_1} & x_7 \arrow{r}{s_3} \arrow{rd}{s_4} & x_3\\
				x_2 \arrow{r}{t_2} & x_6 \arrow{r}{s_2} & x_4 &
				x_2 \arrow{r}{t_2} & x_6 \arrow{r}{s_2} & x_8 & x_4
			\end{tikzcd}
		\end{center}
		\caption{Case 1 (left). Case 2 (right).}
		\label{fig:Case1&2}
	\end{figure}
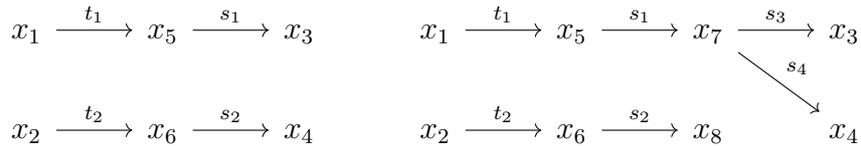
	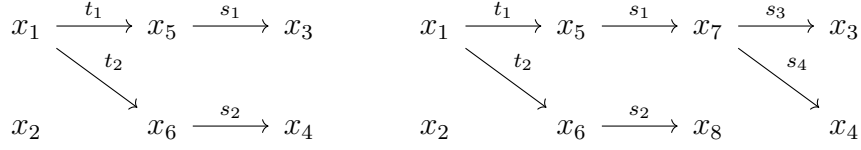
\begin{figure}[!htbp]
		\begin{center}
			\begin{tikzcd}
				x_1 \arrow{r}{t_1} \arrow{rd}{t_2} & x_5 \arrow{r}{s_1} & x_3 &
				x_1 \arrow{r}{t_1} \arrow{rd}{t_2} & x_5 \arrow{r}{s_1} & x_7 \arrow{r}{s_3} \arrow{rd}{s_4} & x_3\\
				x_2 & x_6 \arrow{r}{s_2} & x_4 &
				x_2  & x_6 \arrow{r}{s_2} & x_8 & x_4
			\end{tikzcd}
		\end{center}
		\caption{Case 3 (left). Case 4 (right).}
		\label{fig:Case3&4}
	\end{figure}

	We need to prove for each case, there is an observable transition sequence 
	$(x_1,x_2)\xrightarrow[]{s'}(x_3,x_4)$ in $\CCaLeft(\Scal)$ such that $\ell(s')=\s$.
	We only need to consider the most complex Case 4, all the other cases can be
	dealt with similarly. For Case 4, by definition, the corresponding 
	observable transition sequence is $(x_1,x_2)\xrightarrow[]{\varepsilon}(x_1,x_1)\xrightarrow[]{(t_1,t_2)}
	(x_5,x_6)\xrightarrow[]{s_1'}(x_7,x_8)\xrightarrow[]{\varepsilon}(x_7,x_7)\xrightarrow[]{s_2'}
	(x_3,x_4)$, where $s_1'(L)=s_1$, $s_1'(R)=s_2$, $s_2'(L)=s_3$, $s_2'(R)=s_4$. 

	\eqref{item12_spdet_FSA}, \eqref{item13_spdet_FSA}, and \eqref{item14_spdet_FSA} can be proved similarly.

	\eqref{item15_spdet_FSA} directly follows from definition.
\end{proof}

\begin{example}\label{exam2_spdet_FSA}
	Consider FSA $\Scal_3$, its detector $\Scal_{3det}$, and its ($\varepsilon$-extended) self-composition
	$\CCa(\Scal_3)$ ($\CCaLeft(\Scal_3)$) shown in Fig.~\ref{fig3_spdet_FSA}. There is a transition
	$\{x_1,x_2\}\xrightarrow[]{b}\{x_1,x_2\}$ in $\Scal_{3det}$, but there is neither transition sequence
	$(x_1,x_2)\xrightarrow[]{s'}(x_1,x_2)$ nor $(x_1,x_2)\xrightarrow[]{s'}(x_2,x_1)$ such that
	$\ell(s')=b$ in $\CCa(\Scal_3)$. However, in $\CCaLeft(\Scal_3)$, there is a transition sequence
	$(x_1,x_2)\xrightarrow[]{\varepsilon}(x_1,x_1)\xrightarrow[]{(t_3,t_4)}(x_1,x_2)$ such that
	$\ell(\varepsilon(t_3,t_4))=b$.
	\begin{figure}[!htbp]
		\tikzset{global scale/.style={scale=#1,
    every node/.append style={scale=#1}}}
		\begin{center}
			\begin{tikzpicture}[global scale = 1.0,
				>=stealth',shorten >=1pt,thick,auto,node distance=2.8 cm, scale = 0.8, transform shape,
	->,>=stealth,inner sep=2pt,
				every transition/.style={draw=red,fill=red,minimum width=1mm,minimum height=3.5mm},
				every place/.style={draw=blue,fill=blue!20,minimum size=7mm}]
				\tikzstyle{emptynode}=[inner sep=0,outer sep=0]
				\node[state, initial, initial where = left] (x0) {$x_0$};
				\node[state] (x2) [right of = x0] {$x_2$};
				\node[state] (x1) [above of = x2] {$x_1$};

				\path[->]
				(x0) edge node [above, sloped] {$t_1(a)$} (x1)
				(x0) edge node [above, sloped] {$t_2(a)$} (x2)
				(x1) edge [loop right] node [above, sloped] {$t_3(b)$} (x1)
				(x1) edge node [above, sloped] {$t_4(b)$} (x2)
				;

				\node[elliptic state, initial, initial where = left] (x0') [below of = x0] {$\{x_0\}$};
				\node[elliptic state] (x12') [right of = x0'] {$\{x_1,x_2\}$};

				\path[->]
				(x0') edge node [above, sloped] {$a$} (x12')
				(x12') edge [loop above] node [above, sloped] {$b$} (x12')
				;

				\node[state] (x12'') [right of = x1] {$x_1,x_2$};
				\node[state, initial, initial where = above] (x00'') [below right of = x12''] {$x_0,x_0$};
				\node[state] (x11'') [above right of = x00''] {$x_1,x_1$};
				\node[state] (x21'') [below right of = x00''] {$x_2,x_1$};
				\node[state] (x22'') [below right of = x21''] {$x_2,x_2$};

				\path[->]
				(x00'') edge node [above, sloped] {$(t_1,t_2)$} (x12'')
				(x00'') edge node [above, sloped] {$(t_1,t_1)$} (x11'')
				(x00'') edge node [above, sloped] {$(t_2,t_1)$} (x21'')
				(x00'') edge [bend right] node [below, sloped] {$(t_2,t_2)$} (x22'')
				(x11'') edge [loop right] node [above, sloped] {$(t_3,t_3)$} (x11'')
				(x11'') edge node [above, sloped] {$(t_3,t_4)$} (x12'')
				(x11''.-100) edge node [below, sloped] {$(t_4,t_3)$} (x21''.100)
				(x12'') edge [dotted, bend left] node [above, sloped] {$\varepsilon$} (x11'')
				(x21''.80) edge [dotted] node [below, sloped] {$\varepsilon$} (x11''.-80)
				(x11'') edge [bend left] node [above, sloped] {$(t_4,t_4)$} (x22'')
				;
			\end{tikzpicture}
	\end{center}
	\caption{FSA $\Scal_3$ (upper left), its detector $\Scal_{3det}$ (lower left, the same as observer 
	$\Scal_{3obs}$), its 
	self-composition $\CCa(\Scal_3)$ (right, dotted transitions excluded), and its $\varepsilon$-extended
	self-composition $\CCaLeft(\Scal_3)$ (right).}
	\label{fig3_spdet_FSA}
	\end{figure}
\end{example}

With these properties, we are ready to give a new polynomial-time algorithm for verifying strong
periodic detectability by using $\CCaLeft(\Scal)$.

\begin{theorem}\label{thm8_spdet_FSA}
	An FSA $\Scal$ is
	not strongly periodically detectable if and only if in its $\varepsilon$-extended 
	self-composition $\CCaLeft(\Scal)$ as in \eqref{ConCom_Rever_Det_PN},
	at least one of the two following conditions holds.
	\begin{myenumerate}
		\item\label{item9_spdet_FSA}
			There is a reachable state $(x,\bar x)$ such that $x\ne\bar x$
			and there is a transition
			sequence $x\xrightarrow[]{s_1}x'\xrightarrow[]{s_2}x'$ in $\Scal$ for some
			$s_1\in (T_{uo})^*$, $s_2\in(T_{uo})^+$, $x'\in X$.
		\item\label{item10_spdet_FSA}
			There is a reachable transition cycle $(x_1,\bar x_1)\xrightarrow[]{s_1'}\cdots
			\xrightarrow[]{s_n'}(x_{n+1},\bar x_{n+1})$ for some positive integer $n$ such that
			$(x_1,\bar x_1)=(x_{n+1},\bar x_{n+1})$, $x_i\ne\bar x_i$, and $\ell(s_i')\in\Sig$
			for all $i\in\llb 1,n\rrb$.
	\end{myenumerate}
\end{theorem}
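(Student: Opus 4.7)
The plan is to reduce Theorem~\ref{thm8_spdet_FSA} to Theorem~\ref{thm4_spdet_FSA} by showing that condition~\eqref{item9_spdet_FSA} is equivalent to~\eqref{item5_spdet_FSA} and that~\eqref{item10_spdet_FSA} is equivalent to~\eqref{item6_spdet_FSA}, with Proposition~\ref{prop8_spdet_FSA} as the main bridge between the detector $\Scal_{det}$ and the $\varepsilon$-extended self-composition $\CCaLeft(\Scal)$: parts~\eqref{item11_spdet_FSA}--\eqref{item14_spdet_FSA} allow us to lift detector transitions into $\CCaLeft(\Scal)$, while part~\eqref{item15_spdet_FSA} pushes paths in $\CCaLeft(\Scal)$ back into $\Scal_{obs}$.

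For the ``only if'' direction, I assume $\Scal$ is not strongly periodically detectable, so by Theorem~\ref{thm4_spdet_FSA} either~\eqref{item5_spdet_FSA} or~\eqref{item6_spdet_FSA} holds. In the first case, the reachable state $\{x,\bar x\}$ of $\Scal_{det}$ gives $\{x,\bar x\}\subset\Mt(\Scal,\gamma)$ for some $\gamma$, so by the very definition of $\CCa(\Scal)\subset\CCaLeft(\Scal)$ as aggregating pairs of transition sequences producing the same label, $(x,\bar x)$ is reachable in $\CCaLeft(\Scal)$, which (together with the unchanged unobservable lollipop at $x$) is~\eqref{item9_spdet_FSA}. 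In the second case, I take the reachable detector cycle $\{x_1,\bar x_1\}\xrightarrow[]{\beta_1}\cdots\xrightarrow[]{\beta_n}\{x_1,\bar x_1\}$ and, starting from some ordering of $\{x_1,\bar x_1\}$ reachable in $\CCaLeft(\Scal)$, apply Proposition~\ref{prop8_spdet_FSA}\eqref{item11_spdet_FSA} at each step to produce an observable segment in $\CCaLeft(\Scal)$ whose endpoint is some ordering of $\{x_{i+1},\bar x_{i+1}\}$. Since each $\{x_i,\bar x_i\}$ has only two orderings, traversing the detector cycle three times and applying the Pigeonhole Principle at the checkpoint positions returns to the same ordering, extracting a reachable cycle of $\CCaLeft(\Scal)$ whose checkpoints all satisfy $z_i\ne\bar z_i$ and whose connecting segments have labels in $\Sigma$, which is~\eqref{item10_spdet_FSA}.

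For the ``if'' direction, I first establish by induction on path length the invariant that every path $x_0'\xrightarrow[]{s'}(a,b)$ of $\CCaLeft(\Scal)$ from an initial state satisfies $\{a,b\}\subset\Mt(\Scal,\ell(s'))$; the cases $\tau\in T_o'\cup T_{uo}'$ reduce to the standard current-state-estimate update, and the case $\tau=\varepsilon$ preserves the invariant because the collapse $(x_1,x_2)\mapsto(x_i,x_i)$ with $x_i\in\{x_1,x_2\}$ does not add any new state. Given~\eqref{item9_spdet_FSA}, this invariant immediately yields $|\Mt(\Scal,\gamma)|>1$, which combined with the unobservable lollipop from $x$ is exactly condition~\eqref{item1_spdet_FSA} of Proposition~\ref{prop4_spdet_FSA}, so $\Scal$ is not strongly periodically detectable. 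Given~\eqref{item10_spdet_FSA}, I apply Proposition~\ref{prop8_spdet_FSA}\eqref{item15_spdet_FSA} to the concatenation of the reaching prefix with many traversals of the cycle: this lifts the picture to a long reachable path in $\Scal_{obs}$ each of whose checkpoint states contains some $\{x_i,\bar x_i\}$, hence has cardinality at least $2$; a further Pigeonhole argument over the finitely many $\Scal_{obs}$-states then extracts a reachable cycle of $\Scal_{obs}$ consisting entirely of non-singletons, which is condition~\eqref{item4_spdet_FSA} of Theorem~\ref{thm3_spdet_FSA}, so $\Scal$ is not strongly periodically detectable.

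The main delicate step will be the Pigeonhole bookkeeping when lifting~\eqref{item6_spdet_FSA} to~\eqref{item10_spdet_FSA}: Proposition~\ref{prop8_spdet_FSA}\eqref{item11_spdet_FSA} only controls the \emph{unordered} target, so one has to traverse the detector cycle enough times to guarantee that an ordering of $\{x_1,\bar x_1\}$ is revisited and, at the same time, verify that all intermediate checkpoints automatically have distinct coordinates because each $\{x_i,\bar x_i\}$ has cardinality $2$ in the detector cycle.
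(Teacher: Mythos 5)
Your proof is correct and takes essentially the same route as the paper: the theorem is reduced to the earlier characterizations (Theorem~\ref{thm4_spdet_FSA}, resp.\ Theorem~\ref{thm3_spdet_FSA}/Proposition~\ref{prop4_spdet_FSA}) by matching \eqref{item9_spdet_FSA} with \eqref{item5_spdet_FSA} and \eqref{item10_spdet_FSA} with \eqref{item6_spdet_FSA}, using Proposition~\ref{prop8_spdet_FSA} as the bridge between $\Scal_{det}$, $\Scal_{obs}$, and $\CCaLeft(\Scal)$. The only deviations are cosmetic and sound: you prove directly the invariant $\{a,b\}\subset\Mt(\Scal,\ell(s'))$ for reachable states $(a,b)$ of $\CCaLeft(\Scal)$ (where the paper instead invokes \eqref{item15_spdet_FSA} of Proposition~\ref{prop8_spdet_FSA} together with Proposition~\ref{prop5_spdet_FSA}), you use the defining ``same-label pairs'' property of the self-composition for reachability of $(x,\bar x)$, and you make explicit the pigeonhole over the two orderings of each detector pair when chaining \eqref{item11_spdet_FSA}, a bookkeeping point the paper's one-line ``$\Leftarrow$'' argument leaves implicit.
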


\begin{proof}
	We use Theorem~\ref{thm4_spdet_FSA} and Propositions~\ref{prop5_spdet_FSA} and
	\ref{prop8_spdet_FSA} to prove this result.

	We first check \eqref{item9_spdet_FSA} is equivalent to \eqref{item5_spdet_FSA}.

	``$\Rightarrow$'': Assume \eqref{item9_spdet_FSA} holds. 
	By \eqref{item15_spdet_FSA} of Proposition~\ref{prop8_spdet_FSA} and Proposition~\ref{prop5_spdet_FSA},
	for every reachable state $(x,x')$ of $\CCaLeft(\Scal)$ such that $x\ne x'$, either $\{x,x'\}\subset\Mt(\Scal,\ep)$
	or $\{x,x'\}$ is reachable in $\Scal_{det}$. Hence \eqref{item5_spdet_FSA} holds.

	``$\Leftarrow$'': Assume \eqref{item5_spdet_FSA} holds. If $q'=\Mt(\Scal,\ep)$, then \eqref{item9_spdet_FSA}
	holds. Otherwise (i.e., in case $|q'|=2$ and $q'\ne\Mt(\Scal,\ep)$), by \eqref{item11_spdet_FSA},
	\eqref{item12_spdet_FSA}, \eqref{item13_spdet_FSA}, \eqref{item14_spdet_FSA} of 
	Proposition~\ref{prop8_spdet_FSA}, one has \eqref{item9_spdet_FSA} holds.

	We second check \eqref{item10_spdet_FSA} is equivalent to \eqref{item6_spdet_FSA}.

	``$\Rightarrow$'': Assume \eqref{item10_spdet_FSA} holds. By \eqref{item15_spdet_FSA} of 
	Proposition~\ref{prop8_spdet_FSA} and the Pigeonhole Principle, there is a reachable transition cycle in 
	$\Scal_{obs}$
	none of whose states is a singleton. The by Proposition~\ref{prop5_spdet_FSA}, \eqref{item6_spdet_FSA} holds.

	``$\Leftarrow$'': Assume \eqref{item6_spdet_FSA} holds. By \eqref{item11_spdet_FSA} of 
	Proposition~\ref{prop8_spdet_FSA}, \eqref{item10_spdet_FSA} holds.
\end{proof}

Similarly to the case that Theorem~\ref{thm4_spdet_FSA} implies Theorem~\ref{thm5_spdet_FSA},
Theorem~\ref{thm8_spdet_FSA} also implies an $\NL$ upper bound for strong periodic
detectability of FSAs without any assumption.

\begin{example}\label{exam3_spdet_FSA}
	We next use one example to compare Theorem~\ref{thm4_spdet_FSA} with Theorem~\ref{thm8_spdet_FSA}.
	Reconsider $\Scal_3$ in Example~\ref{exam2_spdet_FSA} (shown in Fig.~\ref{fig3_spdet_FSA}, upper left).
	The existence of reachable transition cycle
	$\{x_1,x_2\}\xrightarrow[]{b}\{x_1,x_2\}$ in $\Scal_{3det}$ implies that $\Scal_3$ is not strongly
	periodically detectable by Theorem~\ref{thm4_spdet_FSA} (satisfying \eqref{item6_spdet_FSA}).
	The existence of reachable transition cycle
	$(x_1,x_2)\xrightarrow[]{\varepsilon}(x_1,x_1)\xrightarrow[]{(t_3,t_4)}(x_1,x_2)$ such that
	$\ell(\varepsilon(t_3,t_4))=b\in \Sig$ in $\CCaLeft(\Scal)$ also implies that $\Scal_3$ is not strongly
	periodically detectable, by Theorem~\ref{thm8_spdet_FSA} (satisfying \eqref{item10_spdet_FSA}).
\end{example}

\subsection{Verifying strong periodic D-detectability}
We also first characterize negation of $T_{\spec}$-strong periodic 
D-detectability. The following result directly follows from Definition~\ref{def11_Det_PN}.

\begin{proposition}\label{prop7_spdet_FSA}
	An FSA $\Scal$ is not
	strongly periodically D-detectable with respect to $T_{\spec}$
	if and only if for each positive integer $k$, there exist $s_k\in L^{\omega}({\Scal})$
	and $s'\sqsubset s_k$ such that for every $s''\in T^*$,
	$|\ell(s'')|<k$ and $s's''\sqsubset s$ imply $(\Mt(\Scal,\ell(s's''))\times\Mt(\Scal,\ell(s's'')))
	\cap T_{\spec}\ne\emptyset$.
\end{proposition}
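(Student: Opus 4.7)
The statement is the direct logical negation of Definition~\ref{def11_Det_PN}, so the plan is to push the negation through the quantifiers symbol by symbol. First I would rewrite the definition symbolically: $\Scal$ is strongly periodically D-detectable with respect to $T_{\spec}$ precisely when
\[
\exists k\in\Z_{>0}\ \forall s\in L^{\omega}(\Scal)\ \forall s'\sqsubset s\ \exists s''\in T^*\ \bigl[P_1(k,s'')\wedge P_2(s,s',s'')\wedge P_3(s',s'')\bigr],
\]
where $P_1(k,s''):\ |\ell(s'')|<k$, $P_2(s,s',s''):\ s's''\sqsubset s$, and $P_3(s',s''):\ (\Mt(\Scal,\ell(s's''))\times\Mt(\Scal,\ell(s's'')))\cap T_{\spec}=\emptyset$. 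Applying the usual De Morgan rules for alternating quantifiers flips each block, yielding
\[
\forall k\in\Z_{>0}\ \exists s\in L^{\omega}(\Scal)\ \exists s'\sqsubset s\ \forall s''\in T^*\ \bigl[\neg P_1(k,s'')\vee\neg P_2(s,s',s'')\vee\neg P_3(s',s'')\bigr],
\]
which is precisely the negation of the D-detectability condition.

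Next I would repackage the inner disjunction as an implication: the clause $\neg P_1\vee\neg P_2\vee\neg P_3$ is equivalent to $(P_1\wedge P_2)\Rightarrow\neg P_3$. Substituting back gives
\[
\forall k\in\Z_{>0}\ \exists s\in L^{\omega}(\Scal)\ \exists s'\sqsubset s\ \forall s''\in T^*\ \bigl[(|\ell(s'')|<k\wedge s's''\sqsubset s)\Rightarrow (\Mt(\Scal,\ell(s's''))\times\Mt(\Scal,\ell(s's'')))\cap T_{\spec}\ne\emptyset\bigr],
\]
and renaming the witness $s$ by $s_k$ (to emphasize that it depends on $k$) produces exactly the statement of Proposition~\ref{prop7_spdet_FSA}. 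There is no genuine obstacle; the only care needed is in handling the conjunction inside the innermost existential, so that vacuous cases where $P_1$ or $P_2$ fails are not mistakenly turned into violations of strong periodic D-detectability. Because the transformation is purely propositional, both directions of the ``if and only if'' are obtained simultaneously from the same chain of equivalences.
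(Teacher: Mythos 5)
Your proposal is correct and matches the paper's treatment: the paper states that the proposition ``directly follows from Definition~\ref{def11_Det_PN},'' i.e.\ it is obtained exactly by the quantifier-negation you carry out, with the inner conjunction repackaged as an implication. Nothing further is needed (the occurrence of $s's''\sqsubset s$ in the statement should of course read $s's''\sqsubset s_k$, as your renaming makes explicit).
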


Similarly to Theorem~\ref{thm3_spdet_FSA}, we can prove the following result.
We omit the similar proof.

\begin{theorem}\label{thm6_spdet_FSA}
	An FSA $\Scal$ is
	not strongly periodically D-detectable with respect to $T_{\spec}$
	if and only if in its observer $\Scal_{obs}$
	as in \eqref{observer_DES}, at least one of the two following conditions holds.
	\begin{myenumerate}
		\item\label{item7_spdet_FSA}
			There is a reachable state $q\in 2^X$ in $\Scal_{obs}$ and $x\in q$
			such that $(q\times q)\cap T_{\spec}\ne\emptyset$ and there is a transition
			sequence $x\xrightarrow[]{s_1}x'\xrightarrow[]{s_2}x'$ in $\Scal$ for some
			$s_1\in (T_{uo})^*$, $s_2\in(T_{uo})^+$, $x'\in X$.
		\item\label{item8_spdet_FSA}
			There is a reachable transition cycle such that
			each state $q$ of the cycle satisfies $(q\times q)\cap T_{\spec}\ne\emptyset$.
	\end{myenumerate}
\end{theorem}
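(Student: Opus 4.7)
The plan is to mirror the two-step structure used for Theorem~\ref{thm3_spdet_FSA}: first prove a D-detectability analog of Proposition~\ref{prop4_spdet_FSA} that characterizes negation of $T_{\spec}$-SPDD purely in terms of label sequences and current-state estimates of $\Scal$, and then translate to $\Scal_{obs}$ via the identity $\Mt(\Scal,\sigma)=\dt_{obs}(\Mt(\Scal,\ep),\sigma)$. Concretely, the intermediate proposition I will prove states that $\Scal$ fails $T_{\spec}$-SPDD iff either (a)~there exists $\gamma\in\LM(\Scal)$ and $x\in\Mt(\Scal,\gamma)$ with $(\Mt(\Scal,\gamma)\times\Mt(\Scal,\gamma))\cap T_{\spec}\ne\emptyset$ and a transition sequence $x\xrightarrow{s_1}x'\xrightarrow{s_2}x'$ with $s_1\in(T_{uo})^*$, $s_2\in(T_{uo})^+$, or (b)~there exists $\alpha\beta\in\LM(\Scal)$ with $|\beta|>0$, $\Mt(\Scal,\alpha)=\Mt(\Scal,\alpha\beta)$, and $(\Mt(\Scal,\alpha\bar\beta)\times\Mt(\Scal,\alpha\bar\beta))\cap T_{\spec}\ne\emptyset$ for every $\bar\beta\sqsubset\beta$. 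The two conditions \eqref{item7_spdet_FSA} and \eqref{item8_spdet_FSA} of the theorem will then drop out verbatim by substituting $\Mt(\Scal,\cdot)$ with states of $\Scal_{obs}$ reachable along the corresponding label.

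For the ``if'' direction of the intermediate proposition, the constructions parallel the proof of Proposition~\ref{prop4_spdet_FSA}. In case (a), I take any transition sequence $x_0\xrightarrow{s_\gamma}x\xrightarrow{s_1}x'\xrightarrow{s_2}x'$ and form $s_k:=s_\gamma s_1 s_2^\omega\in L^\omega(\Scal)$; every extension $s''\sqsubset s_1 s_2^\omega$ of $s_\gamma$ contributes only unobservable events, so $\Mt(\Scal,\ell(s_\gamma s''))=\Mt(\Scal,\gamma)$ and the $T_{\spec}$-intersection condition persists for arbitrarily long unobservable stretches, violating Proposition~\ref{prop7_spdet_FSA}. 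In case (b), $\alpha\beta^\omega\in\LM^\omega(\Scal)$ and along any realization $s_\alpha s_\beta\in L^\omega(\Scal)$ of it, every prefix extending $s_\alpha$ already matches $T_{\spec}$, again contradicting SPDD.

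The main obstacle is the ``only if'' direction, which I will handle by the same contrapositive pigeonhole argument as in Proposition~\ref{prop4_spdet_FSA}, with $|2^X|$ replaced by the same bound since the relevant state space is still $2^X$. Assuming $\Scal$ is not $T_{\spec}$-SPDD and (b) fails, I fix $k>|2^X|$ and obtain $s_k\in L^\omega(\Scal)$ and $s'\sqsubset s_k$ witnessing Proposition~\ref{prop7_spdet_FSA}. The failure of (b) forces $\ell(s_k)$ to be finite with $|\ell(s_k)|<k+|\ell(s')|$, because otherwise selecting $\bar s''$ of observable length $k$ past $s'$ and applying the Pigeonhole Principle to the set of current-state estimates produces the $\alpha,\beta$ required by (b)---the fact that the intersection condition involves $T_{\spec}$ rather than $|\cdot|>1$ does not affect this step, since equality of current-state estimates automatically propagates the $T_{\spec}$-intersection property. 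Thus $s_k=s'\hat s_1''\hat s_2''$ with $\hat s_2''\in(T_{uo})^\omega$, and a second Pigeonhole application on $\hat s_2''$ yields the sought unobservable cycle $x\xrightarrow{\tilde s_1''}x'\xrightarrow{\tilde s_2''}x'$ with $(\Mt(\Scal,\ell(s'\hat s_1''))\times\Mt(\Scal,\ell(s'\hat s_1'')))\cap T_{\spec}\ne\emptyset$, establishing (a). Finally, equivalence between conditions of the intermediate proposition and those of the theorem is immediate from $\Mt(\Scal,\sigma)=\dt_{obs}(\Mt(\Scal,\ep),\sigma)$: (a) matches \eqref{item7_spdet_FSA} and (b) matches \eqref{item8_spdet_FSA} after unfolding $\beta$ once into a cycle of $\Scal_{obs}$ all of whose visited states satisfy the $T_{\spec}$ condition.
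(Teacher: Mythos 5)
Your proposal is correct and follows exactly the route the paper intends: the paper omits the proof of Theorem~\ref{thm6_spdet_FSA} as ``similar to Theorem~\ref{thm3_spdet_FSA}'', i.e., a $T_{\spec}$-version of Proposition~\ref{prop4_spdet_FSA} proved from Proposition~\ref{prop7_spdet_FSA} by the same pigeonhole/unobservable-cycle argument, then translated to $\Scal_{obs}$ via $\Mt(\Scal,\sigma)=\dt_{obs}(\Mt(\Scal,\ep),\sigma)$, which is precisely what you do.
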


\begin{theorem}\label{thm7_spdet_FSA}
	The problem of verifying strong periodic D-detectability with respect to $T_{\spec}$
	belongs to $\PSPACE$.
\end{theorem}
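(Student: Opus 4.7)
The plan is to apply Theorem~\ref{thm6_spdet_FSA} in combination with Savitch's theorem ($\NPSPACE=\PSPACE$) and closure under complement ($\coPSPACE=\PSPACE$). Since Theorem~\ref{thm6_spdet_FSA} characterizes \emph{negation} of strong periodic D-detectability as the existence of either configuration \eqref{item7_spdet_FSA} or \eqref{item8_spdet_FSA} in $\Scal_{obs}$, it suffices to exhibit an $\NPSPACE$ procedure that decides whether at least one of \eqref{item7_spdet_FSA}, \eqref{item8_spdet_FSA} holds; the complement problem (SPDD itself) then lies in $\coNPSPACE=\PSPACE$.

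The crucial observation is that although $\Scal_{obs}$ has up to $2^{|X|}$ states, \emph{each} observer state is a subset of $X$ and is representable by a bit-vector of length $|X|$. Given any such state $q$ and a label $\s\in\Sig$, the successor $\dt_{obs}(q,\s)=(\UR\circ\Reach_\s)(q)$ is computable in polynomial time, hence in polynomial space. Thus we can simulate arbitrary walks in $\Scal_{obs}$ on-the-fly, storing only the current state.

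For \eqref{item7_spdet_FSA}: nondeterministically simulate a path of $\Scal_{obs}$ starting from $\Mt(\Scal,\ep)$, at each step guessing the next label and overwriting the stored state by the computed successor; a binary counter bounded by $2^{|X|}$ (itself using $|X|$ bits) ensures termination. At some guessed step, check that the current $q$ satisfies $(q\times q)\cap T_{\spec}\ne\emptyset$, guess $x\in q$, and then in $\NL$ (hence in $\PSPACE$) verify that $\Scal$ contains an unobservable path from $x$ to some $x'$ lying on an unobservable cycle. For \eqref{item8_spdet_FSA}: nondeterministically guess a candidate cycle base $q^*\in 2^X$ (stored once in polynomial space), then verify (a) reachability of $q^*$ from $\Mt(\Scal,\ep)$ in $\Scal_{obs}$ by on-the-fly simulation, and (b) the existence of a path of length at least one from $q^*$ back to $q^*$ in $\Scal_{obs}$ along which every visited state $q$ satisfies $(q\times q)\cap T_{\spec}\ne\emptyset$. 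Throughout both verifications, only $q^*$ and the current observer state need to be stored, each of size $|X|$ bits, together with an exponentially-bounded binary step counter of size $|X|$.

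The main obstacle is the exponential size of $\Scal_{obs}$, which forbids its explicit construction. This is overcome by the on-the-fly simulation above, since each observer state fits in polynomial space and each successor is computable locally from $\Scal$. Combining the two nondeterministic subroutines gives an $\NPSPACE$ algorithm that accepts iff $\Scal$ is not strongly periodically D-detectable with respect to $T_{\spec}$; Savitch's theorem and $\PSPACE=\coPSPACE$ then yield the claimed $\PSPACE$ upper bound.
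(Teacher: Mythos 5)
Your proposal is correct and follows essentially the same route as the paper: use Theorem~\ref{thm6_spdet_FSA}, check each of conditions \eqref{item7_spdet_FSA} and \eqref{item8_spdet_FSA} in $\NPSPACE$ by guess-and-verify with on-the-fly simulation of $\Scal_{obs}$ (each observer state stored in $|X|$ bits), and conclude via $\coNPSPACE=\PSPACE$. Your write-up is merely more explicit about the on-the-fly successor computation and the step counters than the paper's terse argument, but the idea is the same.
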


\begin{proof}
	Condition \eqref{item7_spdet_FSA} can be checked by guessing $q\in 2^X$, $x\in q$,
	and $x'\in X$ and doing the corresponding checks by nondeterministic research. 
	Since each state $q$ of $\Scal_{obs}$ is bounded by the number of states of $\Scal$,
	and $(q\times q)\cap T_{\spec}\ne\emptyset$ can be checked in time quadratic in the number 
	of states of $\Scal$, \eqref{item7_spdet_FSA} can be checked in $\NPSPACE$.

	Condition \eqref{item8_spdet_FSA} 
	can be checked by nondeterministically guessing a sequence 
	of label sequence and checking whether the sequence leads $\Scal_{obs}$ to such a transition
	cycle. Hence, \eqref{item7_spdet_FSA} can also be checked in $\NPSPACE$.

	Hence by Theorem~\ref{thm6_spdet_FSA}, the problem of verifying 
	strong periodic D-detectability with respect to 
	$T_{\spec}$ belongs to $\coNPSPACE$, i.e., $\PSPACE$. 
\end{proof}

\begin{remark}\label{rem2_spdet_FSA}
	One directly sees that the equivalent condition for strong periodic D-detectability
	of FSAs under Assumption~\ref{assum1_Det_PN} given in \cite[Theorem 9]{Shu2011GDetectabilityDES}
	(collected in Proposition~\ref{prop1_spdet_FSA})
	is exactly negation of \eqref{item8_spdet_FSA} in Theorem~\ref{thm6_spdet_FSA}. So the 
	algorithm induced from \cite[Theorem 9]{Shu2011GDetectabilityDES} usually does not
	work correctly without Assumption~\ref{assum1_Det_PN}. See the following example.
	
	Reconsider $\Scal_2$ (shown in
	Fig.~\ref{fig1_spdet_FSA}, right) and its observer $\Scal_{2obs}$ (shown in Fig.~\ref{fig2_spdet_FSA},
	right). As shown in Example~\ref{exam1_spdet_FSA}, $\Scal_2$ violates Assumption~\ref{assum1_Det_PN}.
	Now choose $T_{\spec}=\{(x_1,x_2)\}$. For every positive integer $k$, choose $s_k=t_2(t_4)^{\omega}
	\in L^{\omega}(\Scal_2)$, then for all $(t_4)^{n}$, where $n\ge0$, one has $t_2(t_4)^n\sqsubset s_k$,
	$|\ell( (t_4)^{n})|=0<k$, $\ell(t_2(t_4)^{n})=a$, and $(\Mt(\Scal_2,a)\times\Mt(\Scal_2,a))\cap T_{\spec}
	=T_{\spec}\ne\emptyset$. That is, $\Scal_2$ is not strongly periodically D-detectable
	with respect to $T_{\spec}$ by definition.
	However, since there is no cycle in $\Scal_{2obs}$, the condition 
	``every reachable transition cycle contains at least one state $q$ such that $(q\times q)\cap 
	T_{\spec}=\emptyset$'' in Proposition~\ref{prop1_spdet_FSA} is satisfied vacuously. Thus, $\Scal_{2obs}$
	is strongly periodically D-detectable with respect to $T_{\spec}$ by Proposition~\ref{prop1_spdet_FSA},
	which is incorrect.
\end{remark}

\begin{example}\label{exam4_spdet_FSA}
	We next illustrate Theorem~\ref{thm6_spdet_FSA}.
	Reconsider FSA $\Scal_3$ in Fig.~\ref{fig3_spdet_FSA} (upper left) and its observer $\Scal_{3obs}$
	in Fig.~\ref{fig3_spdet_FSA} (lower left). If we choose $T_{\spec}^1=\{(x_1,x_2)\}$, the existence of
	self-loop $\{x_1,x_2\}\xrightarrow[]{b}\{x_1,x_2\}$ in $\Scal_{3obs}$ satisfies \eqref{item8_spdet_FSA}
	of Theorem~\ref{thm6_spdet_FSA} (i.e., $(\{x_1,x_2\}\times\{x_1,x_2\})\cap T_{\spec}^1=T_{\spec}^1\ne\emptyset$),
	hence $\Scal_3$ is not strongly periodically D-detectable with respect
	to $T_{\spec}^1$. If we choose $T_{\spec}^2=\{(x_0,x_2)\}$, then by $\Scal_{3obs}$, one sees
	neither \eqref{item7_spdet_FSA} nor \eqref{item8_spdet_FSA} is satisfied, hence $\Scal_3$
	is strongly periodically D-detectable with respect to $T_{\spec}^2$.
\end{example}

\section{Conclusion}\label{sec4:conc}

In this paper, we 
obtained an $\NL$ upper bound for verifying strong periodic
detectability of FSAs without any assumption, strengthening the related results given in 
\cite{Shu2011GDetectabilityDES,Masopust2018ComplexityDetectabilityDES} under two assumptions
of deadlock-freeness and divergence-freeness.
We also obtained a $\PSPACE$ upper bound for verifying strong periodic D-detectability of FSAs
without any assumption, strengthening the related result given in 
\cite{Shu2011GDetectabilityDES,Balun2021ComplexitySPDdetJournal} also under the two assumptions.

As shown in our previous paper \cite{Zhang2019KDelayStrDetDES}, the self-composition 
method can be used to verify (delayed) strong detectability of FSAs without any assumption, 
but the detector method cannot. In this paper, we showed that both the detector method 
and a variant of the self-composition method can be
used to verify strong periodic detectability of FSAs without any assumption. It is an interesting
future topic to study the intrinsic relationships between the two methods.


\end{document}